\title{On canonical bases of a formal ${\mathbb K}$-algebra}
\author{Abdallah Assi
\footnote{Universit\'e d'Angers, Math\'ematiques,
49045 Angers ceded 01, France\newline
\noindent AMS Classification: 06F25.20M25,20M32\newline
Keywords: {Canonical basis, ${\mathbb K}$-algebras, affine semigroups}}}
\date{}
\newtheorem{teorema}{Theorem}
\newtheorem{proposicion}[teorema]{Proposition}
\newtheorem{lema}[teorema]{Lemma}
\newtheorem{corolario}[teorema]{Corollary}
\newtheorem{example}[teorema]{Example}
\newtheorem{definicion}[teorema]{Definition}
\newtheorem{nota}[teorema]{Remark}
\newenvironment{proof}{\noindent Proof.}
\newcommand{\CC}{{\bf F}}
\newcommand{\DD}{{\bf P}}
\newcommand{\KK}{{\mathbb K}}
\newcommand{\NN}{{\mathbb N}}
\newcommand{\RR}{{\mathbb R}}
\newcommand{\M}{\mathrm {M}}
\newcommand{\e}{\mathrm{exp}}
\begin{document}
\maketitle

\begin{abstract} We study canonical bases of a subalgebra  ${\bf A}={\mathbb K}[\![f_1,\dots,f_s]\!]\subseteq {\mathbb K}[\![x_1,\dots,x_n]\!]$
over a field ${\mathbb K}$, and we associate with ${\bf A}$  a fan called the canonical fan of $\bf A$. This generalizes the notion of
the standard fan  of an ideal.
\medskip

\end{abstract}

\section*{Introduction}

\medskip

\noindent Let $\KK$ be a field and let $f_1,\dots,f_s$ be nonzero elements of the 
ring $\CC={\mathbb K}[\![x_1,\dots,x_n]\!]$ of formal power series in $x_1,\dots,x_n$ over ${\mathbb K}$. Let
 ${\bf A}=\mathbb{K}[\![f_1,\dots,f_s]\!]$ be the ${\mathbb K}$-algebra generated
 by $f_1,\dots,f_s$. Set $U={{\mathbb R}_+^*}$  and let $a\in U^n$. If $a=(a_1,\cdots,a_n)$, then $a$ defines
a linear form on $\RR^n$ that maps   
 $\alpha=(\alpha_1,\dots,\alpha_n)\in {\RR}^n$ to the inner product 

$$
a\cdot\alpha=\sum_{i=1}^na_i\alpha_i
$$
 
\noindent  of $a$ with $\alpha$.  Let $\underline{x}=(x_1,\dots,x_n)$ and
let $f=\sum c_{\alpha}\underline{x}^{\alpha}$ be a nonzero element of ${\CC}$. We set
$\mathrm {Supp}(f)=\lbrace \alpha \mid  c_{\alpha}\not=0\rbrace$ and we call it the support of $f$. We set 

$$
\nu(f,a)={\rm min}\lbrace a\cdot\alpha \mid \alpha\in \mathrm{Supp}(f)\rbrace
$$

\noindent  and we call it the $a$-valuation of $f$. We set by convention $\nu(0,a)=+\infty$. Let 

$$
\mathrm{ in}(f,a)=\sum_{\alpha\in \mathrm{Supp}(f) \mid a\cdot\alpha=\nu(f,a)}c_{\alpha}\underline{x}^{\alpha}.
$$

\noindent  We call $\mathrm{in}(f,a)$ the $a$-initial form of $f$ (note that $\mathrm{in}(f,a)$ is a polynomial). 

\medskip

\noindent Let the notations be as above, and let $\prec$ be a well ordering on ${\mathbb N}^n$. We set $\mathrm {exp}(f,a)={\rm max}_{\prec}\mathrm {Supp}\\(\mathrm {in}(f,a))$
 and $\mathrm {M}(f,a)=c_{\mathrm {exp}(f,a)}\underline{x}^{\mathrm {exp}(f,a)}$. We set $\mathrm {in}({\bf A},a)=\mathbb{K}[\![\mathrm {in}(f,a), f\in {\bf A}\setminus\lbrace 0\rbrace]\!]$. We also set $\mathrm {M}({\bf A},a)={\mathbb K}[\![\mathrm {M}(f,a), f\in{\bf A}\setminus\lbrace 0\rbrace]\!]$. The set $\lbrace \mathrm {exp}(f,a)\mid f\in{\bf A}\setminus \lbrace 0\rbrace\rbrace$ is a subsemigroup of ${\mathbb N}^n$. We denote it by $\mathrm {exp}({\bf A},a)$. A set $S\subseteq {\bf A}$ is said to be an $a$-canonical basis of ${\bf A}$ if $\mathrm{exp}({\bf A},a)$ is generated by
 $\lbrace \mathrm{exp}(f,a) \mid f\in S\rbrace$.
\medskip

\noindent If  $a\in{\mathbb R}_+^n$, then $\mathrm {in}(f,a)$ may not be a polynomial, hence $\mathrm{exp}(f,a)$ is not 
well defined. If $a=(a_1,\dots,a_n)$ with $a_{i_1}=\cdots=a_{i_l}=0$, then we can avoid this difficulty in completing by the tangent cone order on $(x_{i_1},\dots,x_{i_l})$. We shall however consider elements in $U^n$ in order to avoid technical definitions and results. 

\medskip 

\noindent This paper has two objectives: we first give an algorithm that calculates, given $a\in U^n$ and a system of generators of ${\bf A}$, an $a$-canonical basis of ${\bf A}$, 
under the assumption that such a basis is finite, and we give criteria for $\mathrm{exp}({\bf A}, a)$ to be  finitely generated. Then we study the 
stability of $\mathrm{M}({\bf A},a)$ and $\mathrm {in}({\bf A},a)$ when $a$ varies in $U^n$. More precisely, we prove, {\it under some specific conditions}, that 
the sets $\lbrace \mathrm{M}({\bf A},a)\mid a\in U^n\rbrace$ and $\lbrace \mathrm{in}({\bf A},a)\mid a\in U^n\rbrace$ are  finite. Then we prove, given a finitely generated  semigroup $S$ of ${\mathbb N}^n$, that the set $E_S=\lbrace 
a\in U^n\mid \mathrm{exp}({\bf A},a)=S\rbrace$  is a union of convex polyhedral cones and the set of all the obtained $E_S$  define a fan of $U^n$. These results generalize those in \cite{mr} for ideals in ${\mathbb K}[x_1,\dots,x_n]$ and \cite{acg1}, \cite{acg2} for ideals in the ring of differential operators over ${\mathbb K}$.








\medskip

\noindent The paper is organized as follows. In Section 1 we recall the notion of canonical basis of ${\bf A}$ with respect to 
$a\in U^n$, and we give an algorithm that computes an $a$-canonical basis starting with a set of generators 
of ${\bf A}$ (see \cite{m} for the case $a=(1,\dots,1)$). In Section 2 we give criteria for an $a$-canonical basis to be finite. In Section 3 we 
study the possible connections between the set of $\M({\bf A},a), a\in U^n$, and we prove a finiteness  theorem for the set $\M({\bf A})$. In Section 4 we
 prove the existence of a fan associated with ${\bf A}$.

\medskip

\section{Computing canonical bases}

\medskip

\noindent Let the notations be as in the introduction. In particular ${\bf A}={\mathbb K}[\![f_1,\dots,f_s]\!]$ is 
a subalgebra of ${\CC}$ generated by $\lbrace f_1,\dots,f_s\rbrace\subseteq {\CC}$ and  $\prec$ is a total well ordering on $\NN^{n}$
compatible with sums. Let $a\in {U}^n$ and consider the total 
ordering  on $\NN^{n}$
defined by:

$$
 \begin{array}{ccl}\alpha <_a \alpha' &
\Leftrightarrow & \left\{ \begin{array}{l} a\cdot\alpha >
a\cdot\alpha' \\ \mbox{or} \\ a\cdot\alpha=
a\cdot\alpha' \mbox{ and } \alpha \prec \alpha'{ . }\end{array} \right.
\end{array}
 $$

\noindent The total ordering $<_a$ is compatible with sums in
$\NN^{n}$.  Let $f=\sum_{\alpha}c_{\alpha}\underline{x}^{\alpha}$ be a non zero element of ${\bf F}$.  With the notations of the introduction we have $\mathrm{exp}(f,a)=\mathrm{max}_{<_a}\mathrm{Supp}(f)=
\mathrm{max}_{\prec}\mathrm{Supp}(\mathrm{in}(f,a))$.
\medskip
 
\noindent We shall use sometimes the notations $\alpha \succ \beta$ for $\beta\prec \alpha$ and $\alpha >_a \beta$ for $\beta <_a \alpha$. We have the following:

\begin{lema}\label{Dixon} There doesn't exist  infinite sequences $(\alpha_k)_{k\geq 0}$ such that 

$$
\alpha_0 >_a \alpha_1 >_a \cdots >_a \alpha_k >_a\cdots 
$$

\noindent with $a \cdot\alpha_0=a\cdot\alpha_k$ for all $k\geq 1$.

\end{lema}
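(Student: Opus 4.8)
The plan is to reduce the statement to the well-foundedness of the tie-breaking order $\prec$. Put $c=<a,\alpha_0>$. By hypothesis $<a,\alpha_k>=c$ for every $k$, so the whole sequence lies in the ``slice''
$$
H_c=\lbrace \alpha\in\NN^n\mid <a,\alpha>=c\rbrace .
$$
The one observation that does all the work is that on $H_c$ the order $<_a$ coincides with $\prec$: if $\alpha,\alpha'\in H_c$ then $<a,\alpha>=<a,\alpha'>$, so by the very definition of $<_a$ we have $\alpha<_a\alpha'$ if and only if $\alpha\prec\alpha'$. Hence the assumed chain $\alpha_0>_a\alpha_1>_a\cdots$ is exactly an infinite strictly $\prec$-decreasing chain $\alpha_0\succ\alpha_1\succ\cdots$ in $\NN^n$.

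Now I would simply invoke that $\prec$ is a well ordering on $\NN^n$: a well ordering admits no infinite strictly decreasing sequence, which contradicts the existence of the chain above. This finishes the proof. (Alternatively, one can avoid even mentioning $\prec$: since $a\in U^n$ has all coordinates strictly positive, setting $m=\min_i a_i>0$ we get $m\,(\alpha_1+\cdots+\alpha_n)\leq <a,\alpha>=c$ for every $\alpha\in H_c$, so $H_c$ is finite; and a sequence whose terms are pairwise distinct — which is the case for a strictly $<_a$-decreasing sequence — cannot be contained in a finite set. Either route yields the lemma.)

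I do not expect a genuine obstacle here: the entire content is the remark that fixing the value $<a,\cdot>$ collapses $<_a$ onto $\prec$ (equivalently, onto a finite set). The only points requiring a little care are the direction of the inequalities in passing from $>_a$ to $\succ$, and the elementary fact that a strictly decreasing chain has pairwise distinct entries.
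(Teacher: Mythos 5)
Your proof is correct and is essentially the paper's own argument: the paper likewise observes that the hypothesis forces the chain to be strictly $\prec$-decreasing and then invokes the well-ordering (Dickson's Lemma). Your parenthetical alternative via finiteness of the slice $H_c$ is a valid bonus but not needed.
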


\begin{proof} This is a consequence of Dickson's Lemma,
since such a sequence satisfies $\alpha_0 \succ \alpha_1 \succ \cdots \succ \alpha_k  \succ\cdots$ $\blacksquare$
\end{proof}
\medskip

\noindent Let  $a\in U^n$. Then $a$ defines a graduation on ${\CC}: \CC=\sum_{d\geq 0}\CC_d$ where ${\bf  F}_d$ is the ${\mathbb K}$-vector space
 generated by $\underline{x}^{\alpha}, a\cdot\alpha=d$. Let $U_1={\mathbb Q}_+^*$. If $a\in U_1^n$, then,  given two indices $d_1<d_2$, the set of indices $d$ such that $d_1<d<d_2$ in the graduation is clearly finite. In particular we get the following:

\begin{lema}\label{Dixon2} Let $\alpha,\beta \in {\mathbb N}^n$ and let $a\in U_1^n$. If  $\alpha >_a \beta$, then there doesn't exist infinite sequences $(\alpha_k)_{k\geq 0}$ such that 

$$
\alpha>_a \alpha_0 >_a \alpha_1 >_a \cdots >_a \beta.
$$
\end{lema}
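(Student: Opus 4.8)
\noindent The plan is to deduce Lemma~\ref{Dixon2} from Lemma~\ref{Dixon} by controlling the finitely many ``$a$-levels'' that such a sequence can pass through. First I would make precise the arithmetic remark that precedes the statement: since $a=(a_1,\ldots,a_n)\in U_1^n$ each $a_i$ is a positive rational, so if $q$ denotes a common denominator of $a_1,\ldots,a_n$ then $<a,\gamma>\in\frac{1}{q}\NN$ for every $\gamma\in\NN^n$. Hence, for any real numbers $d_1<d_2$, the set $\frac{1}{q}\NN\cap[d_1,d_2]$ is finite, i.e. only finitely many values of the form $<a,\gamma>$, $\gamma\in\NN^n$, lie in a bounded interval.

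\medskip

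\noindent Next I would argue by contradiction. Suppose there is an infinite sequence $\alpha>_a\alpha_0>_a\alpha_1>_a\cdots>_a\beta$ (so that all the vectors involved lie in $\NN^n$). Put $v_k=<a,\alpha_k>$, $d_1=<a,\beta>$ and $d_2=<a,\alpha>$. Unwinding the definition of $<_a$, the relation $\gamma>_a\gamma'$ forces $<a,\gamma>\geq<a,\gamma'>$; therefore the sequence $(v_k)_{k\geq0}$ is non-increasing, and since $v_0\leq d_2$ and $v_k\geq d_1$ for all $k$, it is contained in $[d_1,d_2]$. By the first step it takes only finitely many values, so, being non-increasing, it is eventually constant: there is an index $N$ with $v_k=v_N=:d$ for all $k\geq N$.

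\medskip

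\noindent Finally, for every $k\geq N$ we then have $\alpha_k>_a\alpha_{k+1}$ together with $<a,\alpha_k>=<a,\alpha_{k+1}>=d$, which is exactly the configuration ruled out by Lemma~\ref{Dixon} (equivalently, the definition of $<_a$ now gives $\alpha_N\succ\alpha_{N+1}\succ\cdots$, contradicting that $\prec$ is a well ordering on $\NN^n$). This contradiction proves the lemma. The only genuinely substantive point is the first step --- passing from ``$a$ has rational entries'' to ``a bounded range contains finitely many valuation levels''; once that is in place the rest is a direct appeal to Lemma~\ref{Dixon}, so I do not anticipate any real obstacle.
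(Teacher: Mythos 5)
Your proof is correct and follows essentially the same route as the paper: the paper also observes that for $a\in U_1^n$ only finitely many values $<a,\gamma>$ lie strictly between $<a,\beta>$ and $<a,\alpha>$, and then invokes Lemma~\ref{Dixon} on the eventually constant level. Your write-up merely makes explicit the common-denominator argument and the pigeonhole step that the paper leaves implicit.
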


\begin{proof}  The set $\lbrace a\cdot\gamma\in{\mathbb R} \mid a\cdot\alpha >a\cdot\gamma > a\cdot\beta\rbrace$ is finite, hence  the result is a consequence of
 Lemma \ref{Dixon}. $\blacksquare$
\end{proof}

\begin{definicion}\label{homogeneous} Let $a\in U^n$ and let $f= \sum
c_{\alpha}\underline{x}^\alpha$ be a nonzero element 
of ${\CC}$. We say that $f$ is $a$-homogeneous if $f\in \CC_d$ for 
some $d$. This is equivalent to $\nu(\underline{x}^{\alpha},a)=\nu(f,a)$ for all $\alpha\in\mathrm {Supp}(f)$. Every nonzero element $f=\sum
c_{\alpha}\underline{x}^\alpha\in \CC$ decomposes into $a$-homogeneous 
elements, i.e. $f=\sum_{k\geq d}f_k$, with $f_d= \mathrm {in}(f,a)$ and  for all $k> d$, if $f_k\not=0$, then $f_k\in {\CC}_k$. 
\end{definicion}

\noindent The following notations will be used later in the paper.





\begin{definicion} \label{Homogeneous} Let $a\in U^n$ and let ${\bf H}$ be a subalgebra of ${\CC}$. We say that $\bf H$ is
 $a$-homogeneous if it can be generated by $a$-homogeneous elements of ${\CC}$.
\end{definicion}

\begin{definicion} Let $a\in U^n$ and  suppose that $\mathrm{exp}({\bf A},a)$ is finitely generated. Let $\lbrace g_1,\dots,g_r\rbrace\subseteq {\bf A}$. We say 
that  $\lbrace g_1,\dots,g_r\rbrace$ is an $a$-canonical basis
 of ${\bf A}$ if $\M({\bf A},a)={\mathbb K}[\![{\mathrm M}(g_1,a),\dots,{\mathrm M}(g_r,a)]\!]$. Clearly 
$\lbrace g_1,\dots,g_r\rbrace$ is an $a$-canonical basis of ${\bf A}$ if and only if $\mathrm{exp}({\bf A},a)$ is 
generated by $\lbrace \mathrm{exp}(g_1,a), \dots,$\\
$\mathrm{exp}(g_r,a)\rbrace$. In this case we 
write $\mathrm{exp}({\bf A},a)=\langle \mathrm{exp}(g_1,a),\dots,\mathrm{exp}(g_r,a)\rangle$.
\end{definicion}

\noindent  A finite  $a$-canonical basis $\lbrace g_1,\dots,g_r\rbrace$
of $\bf A$ is said to be minimal if  $\lbrace \M(g_1,a),\dots,\M(g_r,a)\rbrace$ is a minimal
set of generators of $\M({\bf A},a)$.  It is said to be reduced if the following conditions
are satisfied:

i) $\lbrace g_1,\dots,g_r\rbrace$ is minimal.

ii) For all $1\leq i\leq r, c_{{\rm exp}(g_i,a)}=1$.

iii) For all $1\leq i\leq r$, if $g_i-\M(g_i,a)\not= 0$, then   $\underline{x}^{\underline{\alpha}}\notin {\mathbb K}[\![\M({g_1,a}),\dots, \M(g_r,a)]\!]$ for all    
$\alpha\in$ Supp$(g_i-\M(g_i,a))$.

\medskip

\begin{lema} \label{reduced} If $\mathrm{exp}({\bf A},a)$ is finitely generated and if an $a$-reduced canonical basis exists,  then it is unique. We write $e({\bf A},a)$ for its cardinality.
 \end{lema}

\begin{proof} Let  $F=\lbrace g_1,\dots,g_r\rbrace$ and $G=\lbrace g'_1,\dots,g'_t\rbrace$ be two
$a$-reduced canonical bases of $\bf A$. Let $i=1$. Since
 $\M(g_1,a)\in {\mathbb K}[\![\M({g'_1,a}),\dots, \M(g'_t,a)]\!]$,  
we have $\M(g_1,a)=\M(g'_1,a)^{l_1}\dots{\M(g'_t,a)}^{l_t}$ for
 some $l_1,\dots,l_t\in {\mathbb N}$. Every $\M(g'_i,a), i\in\lbrace 1,\dots,t\rbrace$, is in ${\mathbb K}[\![\M(g_1,a),\dots,\M(g_r,a)]\!]$, 
then the equation above is possible only if $\M(g_1,a)=\M(g'_{k_1},a)$ for some $k_1\in\lbrace 1,\dots,t\rbrace$. This 
gives an injective map from $\lbrace \M(g_1,a),\dots,\M(g_r,a)\rbrace$ to $\lbrace \M(g'_1,a),\dots,\M(g'_t,a)\rbrace$. We 
construct in the same way an injective map from $\lbrace \M(g'_1,a),\cdots,\M(g'_t,a)\rbrace$ to  $\lbrace \M(g_1,a),\dots,\M(g_r,a)\rbrace$. 
Hence $r=t$ and both sets are equal. Suppose, without loss of generality, that $\M(g_i,a)=\M(g'_i,a)$ for all $i\in\lbrace 1,\dots,r\rbrace$. 
If $g_i\not= g'_i$, then $\M(g_i-g'_i)\in \M({\bf A},a)$ because $g_i-g'_i\in{\bf A}$. This contradicts iii). $\blacksquare$
\end{proof}

\medskip

 \noindent We now recall the division process for algebras in ${\bf F}$ (see \cite{m} for the case $a=(1,\dots,1)$ and \cite{agm} for $n=1$).

\medskip

\begin{teorema}\label{division} Let $a\in U_1^n$ and let $\lbrace F_1,\dots,F_s\rbrace\subseteq \CC$. Let $F$ be a nonzero element of
$\CC$. There exist  $H\in {\mathbb K}[\![F_1,\dots,F_s]\!]$ and $R\in {\CC}$ such that the following conditions hold:

\begin{enumerate}

 \item $F=H+R$ and $\e(F,a)=\mathrm{max}(\e(H,a),\e(R,a))$ as long as $H\not=0$ and $R\not=0$.

\item  $H$ admits an expression of the form $H=\sum_{\alpha}c_{\alpha}\prod_{i=1}^sF_i^{\alpha_i}$ with 
$\e(F,a)\geq_a \e(\prod_{i=1}^sF_i^{\alpha_i},a)$ for all $\alpha$ with $c_{\alpha}\not=0$. Moreover, if $R=0$, then 
there exists a unique $\alpha^0$ such that $\e(F,a)= \e(\prod_{i=1}^sF_i^{\alpha^0_i},a)$.

\item  Let $R=\sum_{\beta}b_{\beta}{\underline{x}}^{\beta}$. If $R\not=0$, then for 
all $\beta\in\mathrm {Supp}(R), \underline{x}^{\beta}\notin{\mathbb{K}}[\![\M(F_1,a),\dots,\M(F_s,a)]\!]$ $\mathrm{(}$or equivalently 
$\beta\notin\langle \mathrm{exp}(F_1),\dots,\mathrm{exp}(F_s)\rangle$$\mathrm{)}$. Moreover, $\e(F,a)\geq \beta$.

\end{enumerate}

\noindent  We set   $R=R_a(F,\lbrace F_1,\dots,F_s\rbrace)$ and we say that $R$  is an $a$-remainder of the division of $F$ with respect to $\lbrace F_1,\dots,F_s\rbrace$.
\end{teorema} 

\begin{proof} We define the sequences
$(F^k)_{k\geq 0},(h^k)_{k\geq 0}$, $(r^k)_{k\geq 0}$ in $\CC$ by $F^0=F, h^0=r^0=0$ and $\forall k\geq
0$~:
\medskip

{\it (i)} If $F^k\not=0$ and if $\M(F^k,a)\in {\mathbb K}[\![\M(F_1,a),\dots,\M(F_s,a)]\!]$, write
$\M(F^k,a)=c_{\alpha}\prod_{i=1}^s\M(F_i,a)^{\alpha_i}$. We set 

$$
F^{k+1}=F^k- c_{\alpha}\prod_{i=1}^sF_i^{\alpha_i},\quad 
h^{k+1}=h^k+c_{\alpha}\prod_{i=1}^sF_i^{\alpha_i}, \quad r^{k+1}=r^k.
$$

{\it (ii)} If $F^k\not=0$ and if $\M(F^k,a)\notin {\mathbb K}[\![\M(F_1,a),\dots,\M(F_s,a)]\!]$, we set

$$
F^{k+1}=F^k-{\mathrm M}(F^{k},a),\quad
h^{k+1}=h^k,\quad r^{k+1}=r^k+{\mathrm M}(F^k,a)
$$

\noindent in such a way that for 
all $k\geq 0$, $\mathrm {exp}(F^k,a)>_a \mathrm {exp}(F^{k+1},a)$ 
and $F=F^{k+1}+h^{k+1}+r^{k+1}$. If $F^l=0$ for 
some $l\geq 1$, we set $H=h^l$ and $R=r^l$. We  easily verify that $H,R$ 
satisfy conditions (1) to (3). Suppose that $\lbrace F^{k}\mid  k\geq 0\rbrace$ is an infinite set. Note
 that, by Lemma \ref{Dixon}, given $k\geq 1$, if $F^k\not=0$, then there exists $k_1 >k$ such 
that $\nu(F^k,a)<\nu(F^{k_1},a)$. Hence,  there exists a subsequence $(F^{j_l})_{l\geq 1}$ such 
that $\nu(F^{j_1},a)<\nu(F^{j_2},a)<\cdots$. In particular, if we set 
$G=\lim_{k\to+\infty}F^k, H=\lim_{k\to+\infty}h^k$, and  $R=\lim_{k\to+\infty}r^k$, 
then $G=0$, $F=H+R$, and $H,R$ satisfy conditions (1) to (3). This completes the proof. $\blacksquare$
\end{proof}

\medskip
\begin{nota} The element  $R=R_a(F,\lbrace F_1,\dots,F_s\rbrace)$ of Theorem \ref{division} is not necessarily unique. For example,
if $a=1$, $F_1=x^4, F_2=x^6+x^7$, and $F=x^{12}$, then 
$\mathrm{M}(F,a)=x^{12}\in {\mathbb K}[\![\mathrm{M}(F_1,a),\mathrm{M}(F_2,a)]\!]= {\mathbb K}[\![x^4,x^6]\!]$. 
But $x^{12}=(x^4)^3=(x^6)^2$. If we divide first by $F_1$, we get $F-F_1^3=0$, hence $F=F_1^3+0$, and $R_a(F,\lbrace F_1,F_2\rbrace)=0$. If we divide first by 
$F_2$, we get $F-F_2^2=-2x^{13}-x^{14}$. As $x^{14}=(x^4)^2x^6$, we have 
$F-F_2^2+F_1^2F_2=-2x^{13}+x^{15}$, hence $F=F_2^2-F_1^2F_2-2x^{13}+x^{15}$, and $R_a(F_1,\lbrace F_1,F_2\rbrace)=-2x^{13}+x^{15}$. 

\end{nota} 

\medskip

\noindent The following lemma shows that $R_a(F,\lbrace F_1,\dots,F_s\rbrace)$ becomes unique if we divide by an $a$-canonical basis.

\begin{lema} Let $a\in U_1^n$ and let $\lbrace F_1,\dots,F_s\rbrace\subseteq \CC$. Suppose that $\lbrace F_1,\dots, F_s\rbrace$ is  an 
$a$-canonical basis of ${\mathbb K}[\![F_1,\dots,F_s]\!]$. If $F$ is a nonzero
 element of $\CC$, then $R_a(F,\lbrace F_1,\dots,F_s\rbrace)$ 
is unique.
\end{lema}

\begin{proof} Let the notations be as in Theorem \ref{division}  and write 
$F=H_1+R_1=H_2+R_2$  where $H_i,R_i, i=1,2$ satisfy conditions (1) to (3) of the theorem. We have
$R_1-R_2=H_2-H_1$. Clearly $R_1-R_2\in {\mathbb K}[\![F_1,\dots,F_s]\!]$. If $R_1-R_2\not=0$, then, by 
condition  (3), $\mathrm{M}(R_1-R_2,a)\notin  {\mathbb K}[\![\mathrm{M}(F_1,a),\dots,\mathrm{M}(F_s,a)]\!]$.
This is a contradiction because $\lbrace F_1,\dots,F_s\rbrace$ is an $a$-canonical basis of ${\mathbb K}[\![F_1,\dots,F_s]\!]$. $\blacksquare$
\end{proof}

\medskip

\noindent Suppose that $\lbrace f_1,\dots,f_s\rbrace$ is an $a$-canonical 
basis of $\bf A$. If
M$(f_i,a)\in {\mathbb K}[\![{\rm M}(f_j,a), j\not= i]\!]$ for some $1\leq i\leq s$,  then obviously
$\lbrace f_j \mid j\not=i\rbrace$ is also an $a$-canonical basis of $\bf A$. Consequently we can get this way a minimal $a$-canonical basis of ${\bf A}$. Assume  that  
$\lbrace f_1,\dots,f_s\rbrace$ is minimal and let $1\leq i\leq s$. 
Dividing  $f=f_i-\M(f_i,a)$ by $\lbrace f_1,\dots,f_s\rbrace$, and 
replacing $f_i$ by $\M(f_i,a)+R_a(f,\lbrace f_1,\dots,f_s\rbrace)$, we obtain  an $a$-reduced canonical basis of ${\bf A}$. 

\medskip 

\noindent The next proposition gives a criterion for a finite set of ${\bf A}$ to be an $a$-canonical basis of ${\bf A}$.

\begin{proposicion}\label{criterion} Let $a\in U_1^n$. The set $\lbrace f_1,\dots,f_s\rbrace\subseteq {\bf A}$ is an $a$-canonical basis of $\bf A$ if and only if 
$R_a(f,\lbrace f_1,\dots,f_s\rbrace)=0$ for all $f\in {\bf A}$.
\end{proposicion}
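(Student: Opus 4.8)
The plan is to prove the two implications separately. The easy direction is ($\Rightarrow$): suppose $\{f_1,\ldots,f_s\}$ is an $a$-canonical basis of $\mathbf{A}$ and let $f\in\mathbf{A}$. Apply the division algorithm of Theorem \ref{division} to write $f=H+R$ with $R=R_a(f,\{f_1,\ldots,f_s\})$ and $H\in\mathbf{A}$, so that $R=f-H\in\mathbf{A}$. If $R\neq 0$, then $\mathrm{M}(R,a)\in\mathrm{M}(\mathbf{A},a)=\mathbb{K}[[\mathrm{M}(f_1,a),\ldots,\mathrm{M}(f_s,a)]]$ since $\{f_1,\ldots,f_s\}$ is an $a$-canonical basis. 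But $\mathrm{exp}(R,a)\in\mathrm{Supp}(R)$, which contradicts condition (2) of Theorem \ref{division}, namely that no monomial $\underline{x}^{\beta}$ with $\beta\in\mathrm{Supp}(R)$ lies in $\mathbb{K}[[\mathrm{M}(f_1,a),\ldots,\mathrm{M}(f_s,a)]]$. Hence $R=0$.

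For the converse ($\Leftarrow$), assume $R_a(f,\{f_1,\ldots,f_s\})=0$ for all $f\in\mathbf{A}$. I must show $\mathrm{exp}(\mathbf{A},a)=\langle\mathrm{exp}(f_1,a),\ldots,\mathrm{exp}(f_s,a)\rangle$; the inclusion $\supseteq$ is clear since each $f_i\in\mathbf{A}$, so it remains to show that for every nonzero $f\in\mathbf{A}$, $\mathrm{exp}(f,a)$ lies in the semigroup $S=\langle\mathrm{exp}(f_1,a),\ldots,\mathrm{exp}(f_s,a)\rangle$. Fix such an $f$ and apply the division algorithm: $f=H+R$ with $R=R_a(f,\{f_1,\ldots,f_s\})=0$ by hypothesis, so $f=H=\sum_\alpha c_\alpha f_1^{\alpha_1}\cdots f_s^{\alpha_s}$. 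By condition (3) of Theorem \ref{division}, $\mathrm{exp}(f,a)=\max_{<_a}\{\mathrm{exp}(f_1^{\alpha_1}\cdots f_s^{\alpha_s},a)\mid c_\alpha\neq 0\}$, and since $<_a$ is compatible with sums, $\mathrm{exp}(f_1^{\alpha_1}\cdots f_s^{\alpha_s},a)=\alpha_1\,\mathrm{exp}(f_1,a)+\cdots+\alpha_s\,\mathrm{exp}(f_s,a)\in S$. Thus $\mathrm{exp}(f,a)\in S$, as desired.

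The main point requiring care is the claim in condition (3) that $\mathrm{exp}$ of a product equals the sum of the $\mathrm{exp}$'s; this relies on the order $<_a$ being a total order compatible with sums on $\NN^n$, so that there is no cancellation of leading terms — the product $\mathrm{M}(f_i,a)$'s multiply to give the unique maximal monomial of $f_1^{\alpha_1}\cdots f_s^{\alpha_s}$. I should also make sure the division algorithm genuinely produces an $H$ of the stated form when $R=0$, which is guaranteed by Theorem \ref{division} itself, so no extra work is needed there. The only genuine obstacle is bookkeeping: verifying that the hypothesis $R_a(f,\{f_1,\ldots,f_s\})=0$ for \emph{all} $f\in\mathbf{A}$ is exactly what lets us conclude $\mathrm{M}(f,a)\in\mathbb{K}[[\mathrm{M}(f_1,a),\ldots,\mathrm{M}(f_s,a)]]$ — but this is immediate from condition (3) once $R=0$, so the proof is short.
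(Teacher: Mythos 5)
Your proof is correct and follows essentially the same route as the paper: both directions are immediate consequences of the division theorem (Theorem \ref{division}). The only cosmetic difference is in the converse, where the paper argues by contrapositive — if $\mathrm{M}(f,a)$ lay outside $\mathbb{K}[[\mathrm{M}(f_1,a),\ldots,\mathrm{M}(f_s,a)]]$, the first step of the division would place it in the remainder, which is $0$ — whereas you apply condition (3) to $f=H$; both are valid.
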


\begin{proof} Suppose that  $\lbrace f_1,\dots,f_s\rbrace$ is an $a$-canonical
 basis of $\bf A$ and let $f\in{\bf A}$. Let $R=R_a(f,\lbrace f_1,\dots$\\$,f_s\rbrace)$. If $R\not=0$, then
 $\M(R,a)\notin \M({\bf A},a)$. This is a contradiction because $R\in{\bf A}$. Conversely, suppose 
that $R_a(f,\lbrace f_1,\dots,f_s\rbrace)=0$ for all $f\in {\bf A}$ and 
let $F\in{\bf A}$. If $\M(F,a)\notin {\mathbb K}[\![\M(f_1,a),\dots,\M(f_s,a)]\!]$, 
then $\M(F,a)$ is  a monomial of $R_a(F,\lbrace f_1,\dots, f_s\rbrace)$, 
which is $0$. This is a contradiction. $\blacksquare$
\end{proof}

\medskip

\noindent The criterion given in Proposition \ref{criterion} is not 
effective since we have to divide infinitely many  elements of ${\CC}$. In 
the following we shall see that it is enough to divide a finite number of elements. 

\medskip

\noindent Let $\lbrace f_1,\dots,f_s\rbrace\subseteq \CC$ and let $\phi:{\mathbb K}[X_1,\dots,X_s]\to {\mathbb K}[\M(f_1,a),\dots,\M(f_s,a)]$ be the morphism 
of rings defined by  $\phi(X_i)=\M(f_i,a)$ for all $1\leq i\leq s$. We have the following.

\begin{lema} \label{binomial}The ideal $\mathrm {Ker}(\phi)$ is a binomial ideal, i.e., it can be generated by binomials.

\end{lema}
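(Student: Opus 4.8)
The plan is to exhibit $\mathrm{Ker}(\phi)$ as the kernel of a monomial map between polynomial rings, which is classically known to be a toric (hence binomial) ideal, and to make the monomial bookkeeping explicit using the exponents $\e(f_i,a)\in\NN^n$. First I would observe that $\M(f_i,a)=c_i\underline{x}^{\e(f_i,a)}$ with $c_i\in\KK^*$, so after the harmless rescaling $X_i\mapsto c_i^{-1}X_i$ we may assume each $\M(f_i,a)=\underline{x}^{\e(f_i,a)}$; rescaling the variables $X_i$ is an automorphism of $\KK[X_1,\ldots,X_s]$ that carries binomials to binomials, so it suffices to treat the purely monomial case. Then $\phi$ is the composite $\KK[X_1,\ldots,X_s]\to\KK[\underline{x}^{\pm}]$ sending $X^{u}=X_1^{u_1}\cdots X_s^{u_s}$ to $\underline{x}^{\,u_1\e(f_1,a)+\cdots+u_s\e(f_s,a)}$, i.e.\ $\phi(X^u)=\underline{x}^{L(u)}$ where $L:\NN^s\to\NN^n$ is the additive map $L(u)=\sum_i u_i\,\e(f_i,a)$ (it extends to $L:\ZZ^s\to\ZZ^n$, but we only need it on $\NN^s$ since we work in $\KK[[\underline{x}]]$, which has no negative exponents).

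Next I would prove the standard fact that such a kernel is binomial. Let $g=\sum_{u} a_u X^u\in\mathrm{Ker}(\phi)$, $g\neq 0$. Partition $\mathrm{Supp}(g)\subseteq\NN^s$ into the equivalence classes of the relation $u\sim u'\iff L(u)=L(u')$; applying $\phi$ and collecting monomials in $\underline{x}$, the class containing a given exponent contributes $\bigl(\sum_{u\in\text{class}}a_u\bigr)\underline{x}^{L(u)}$, and since the $\underline{x}^{L(u)}$ for distinct classes are distinct monomials, $g\in\mathrm{Ker}(\phi)$ forces $\sum_{u\in\text{class}}a_u=0$ for every class. Fixing a class $C$ and a base point $u_0\in C$, we may then write $\sum_{u\in C}a_u X^u = \sum_{u\in C}a_u\,(X^u-X^{u_0})$ using $\sum_{u\in C}a_u=0$, and each $X^u-X^{u_0}$ with $u,u_0\in C$ is a binomial lying in $\mathrm{Ker}(\phi)$ because $L(u)=L(u_0)$ implies $\phi(X^u-X^{u_0})=\underline{x}^{L(u)}-\underline{x}^{L(u_0)}=0$. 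Summing over all classes expresses $g$ as a $\KK$-linear combination of binomials of $\mathrm{Ker}(\phi)$, so these binomials generate the ideal; by Hilbert's basis theorem finitely many of them suffice.

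The only genuinely delicate point is the coefficient-matching step: one must be sure that when $\phi(g)$ is written out, no two monomials $X^u,X^{u'}$ with $L(u)\neq L(u')$ can cancel against each other. This is immediate because $\underline{x}^{L(u)}=\underline{x}^{L(u')}$ in $\KK[[\underline{x}]]$ if and only if $L(u)=L(u')$ as elements of $\NN^n$, so the map $u\mapsto\underline{x}^{L(u)}$ separates classes exactly. (If one preferred to phrase $\phi$ with target $\KK[\underline{x}^{\pm}]$ to allow the $\e(f_i,a)$ to be combined with integer coefficients, the same argument works verbatim with $L:\ZZ^s\to\ZZ^n$, but here the natural domain is $\NN^s$ and nothing more is needed.) Thus $\mathrm{Ker}(\phi)$ is generated by the binomials $X^u-X^{u'}$ with $\sum_i u_i\,\e(f_i,a)=\sum_i u'_i\,\e(f_i,a)$, which proves the lemma.
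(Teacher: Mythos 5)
Your proof is correct and rests on the same key observation as the paper's: since $\phi$ sends monomials to monomials, cancellation in $\phi(F)=0$ can only occur among monomials of $F$ lying in the same fiber of the exponent map, so each such group already lies in $\mathrm{Ker}(\phi)$ and is a $\KK$-linear combination of binomials of $\mathrm{Ker}(\phi)$. The paper organizes this as an induction on the number of monomials of $F$, peeling off one binomial at a time, whereas you partition the support into fibers and handle each class at once via the vanishing of the coefficient sums; these are the same argument in different packaging.
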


\begin{proof} See \cite{CLS}, Proposition 1.1.9., for example. $\blacksquare$ 
\end{proof}




\medskip

\noindent Let $\bar{S'}_1,\dots,\bar{S'}_{m_1}$ be a system of generators of Ker$(\phi)$, and assume, by Lemma \ref{binomial}, that 
$\bar{S'}_1,\dots,\bar{S'}_{m_1}$ are binomials in ${\mathbb K}[X_1,\dots,X_s]$. Assume that $f_1,\dots,f_s$ are monic with 
respect to $<_a$. For all $1\leq i\leq m_1$, we can write $\bar{S'}_i(X_1,\dots,X_s)=X_1^{\alpha_1^i}\cdots X_s^{\alpha_s^i}- X_1^{\beta_1^i}\cdots X_s^{\beta_s^i}$. Let 
$S'_i=\bar{S'}_i(f_1,\dots,f_s)=f_1^{\alpha_1}\dots f_s^{\alpha_s}-f_1^{\beta_1}\dots f_s^{\beta_s}
$. We have  $\mathrm {exp}(S'_i,a)<_a \mathrm {exp}(f_1^{\alpha_1}\dots f_s^{\alpha_s},a)=\mathrm {exp}(f_1^{\beta_1}\dots f_s^{\beta_s},a)$
(note that we only have $\nu(S'_i,a)\geq \nu(f_1^{\alpha_1}\dots f_s^{\alpha_s},a)=\nu(f_1^{\beta_1}\dots f_s^{\beta_s},a)$).


\medskip

\noindent Next we shall define the notion of $S$-polynomials. Let to this end $L(X_i)=\nu(f_i,a)$ for all $i\in\lbrace 1,\dots,s\rbrace$. Then $L$ defines a filtration on $\KK[X_1,\dots,X_s]$. We write abusely $L(\alpha)=L(\underline{X}^{\alpha})=\sum_{i=1}^s\alpha_i\nu(f_i,a)$ for all $\alpha\in{\mathbb N}^s$.  Given $0\not=F=\sum_{\alpha}c_{\alpha}\underline{X}^{\alpha}\in {\mathbb K}[X_1,\dots,X_n]$, we write $F=F_{d_1}+\dots+F_{d_l}$ where $d_1<d_2<\dots <d_l$ and $F_{d_i}$ is $L$-homogeneous. We set $L(F)=d_1$. Consider on $\NN^s$ the following total ordering: $\alpha<_g \beta$ if and only if either $L(\alpha)>L(\beta)$, or $L(\alpha)=L(\beta))$ and $\alpha \prec_g \beta$, where $\prec_g$ is a well ordering on $\NN^s$. Given $0\not=F=\sum_{\alpha}c_{\alpha}\underline{X}^{\alpha}$, the leading exponent of $F$ with respect to $<_g$ is denoted 
$\mathrm{Exp}(F)$, and the leading monomial $c_{\mathrm{Exp}}(F)\underline{X}^{\mathrm{Exp}(F)}$ is denoted $M(F)$. Given $i\in\lbrace 1,\dots,m_1\rbrace$, if $\bar{S'}_i=\underline{X}^{\alpha}-\underline{X}^{\beta}$, then $\sum_{i=}^s\alpha_i\nu(f_i,a)=\sum_{i=}^s\beta_i\nu(f_i,a)$, whence $\bar{S'}_i$ is $L$-homogeneous , and
$\mathrm{Exp}(\bar{S'}_i)$ is just the leading exponent of $S'_i$ with respect to $\prec_g$.

\begin{definicion} Let the notations be as above, and let $\bar{S}_1,\dots,\bar{S}_m$ be a minimal reduced Grobner basis of $\mathrm{Ker}(\phi)$ with respect to $\prec_g$. We call $\lbrace \bar{S}_1,\dots,\bar{S}_m\rbrace$ the set of $S$-polynomials  associated with $\lbrace f_1,\dots,f_s\rbrace$, and we recall that $\bar{S}_i$ is a binomial for all $i\in\lbrace 1,\dots,m\rbrace$.
\end{definicion}


\begin{proposicion}\label{S-polynomials} Let $a\in U_1^n$ and let  $\lbrace f_1,\dots,f_s\rbrace\subseteq {\bf A}$. Let $\lbrace \bar{S}_1,\dots,\bar{S}_m\rbrace$ be the set of $S$-polynomials  associated with $\lbrace f_1,\dots,f_s\rbrace$. For all $i\in \lbrace 1,\dots, m\rbrace$, we set $S_i=\bar{S}_i(f_1,\dots,f_s)$.  The following conditions are equivalent:

\begin{enumerate}
    \item The set $\lbrace f_1,\dots,f_s\rbrace$ is an $a$-canonical basis of ${\bf A}$.
    
    \item For all $i\in\lbrace 1,\dots, m\rbrace, R_a(S_i,\lbrace f_1,\dots,f_s\rbrace)=0$.
\end{enumerate}

\end{proposicion}

\begin{proof} (1) implies (2) by Proposition \ref{criterion}.

\medskip

(2) $\Longrightarrow$ (1): We shall prove that $R_a(f,\lbrace f_1,\dots,f_s\rbrace)=0$ for all $f\in{\bf A}$. Let $f$ be 
a nonzero element of ${\bf A}$ and let $R=R_a(f,\lbrace f_1,\dots,f_s\rbrace)$. By Theorem \ref{division}, $f=H+R$ and $H\in{\bf A}$, 
hence $R\in{\bf A}$. Consequently, if $R\not=0$, 
then $\M(R,a)\in \M({\bf A},a)$. Write

$$
R=\sum_{\theta}c_{\theta}f_1^{\theta_1}\cdots f_s^{\theta_s}
$$

\noindent and let $E=\lbrace \sum_{i=1}^s \theta_i\mathrm{exp}(f_i,a)=\mathrm{exp}(f_1^{\theta_1}\dots f_s^{\theta_s} \mid c_{\theta}\not=0\rbrace$. By  a similar argument to the one used in Lemma \ref{Dixon2}, the set $E_1=\lbrace \beta\in E \mid \beta  \geq_a \mathrm{exp}(R,a)\rbrace$ is finite. In particular,
$\alpha=\mathrm {max}_{\theta,c_{\theta}\not=0}(\mathrm {exp}(f_1^{\theta_1}\cdots f_s^{\theta_s},a))=\mathrm{max}_{<_a} E_1$ is well defined . Let $S= \langle \mathrm {exp}(f_1,a),\dots,\mathrm{exp}(f_s,a)\rangle$. By condition (2) of Theorem \ref{division},
$\mathrm {exp}(R,a)\notin S$. On the other hand,  $\mathrm{exp}(R,a)\leq_a \alpha\in  S$. Consequently 
$\mathrm {exp}(R,a)<_a \alpha$. \\ Let  $\lbrace {\theta}^1,\dots,{\theta}^l\rbrace$ be the set of elements such that $\alpha= {\rm exp}(f_1^{\theta^i_1}\cdots  f_s^{\theta^i_s},a)$ for all
 $i\in\{1,\dots, l\}$ (such a set is clearly finite). If $\sum_{i=1}^lc_{{\theta}^i}\M(f_1^{\theta^i_1}\cdots  f_s^{\theta^i_s},a)\not=0$, 
then $\mathrm{exp}(R,a)\in S$, which 
is a contradiction. It follows that  $\sum_{i=1}^lc_{{\theta}^i}\M(f_1^{\theta^i_1}\cdots  f_s^{\theta^i_s},a)=0$, and consequently 
 the polynomial $F=\sum_{i=1}^lc_{\underline{\theta}^i}X_1^{\theta^i_1}\cdots  X_s^{\theta^i_s}$ is an element of  $\mathrm{Ker}(\phi)$. In particular,  as $\lbrace \bar{S}_1,\dots,\bar{S}_m\rbrace$ is a Grobner basis of $\mathrm{Ker}(f)$, dividing $F$ by this set, we can write
 
 $$
F= \sum_{i=1}^lc_{\underline{\theta}^i}X_1^{\theta^i_1}\cdots  X_s^{\theta^i_s}=\sum_{k=1}^m\lambda_k\bar{S}_k
 $$
 
 
\noindent  with $\lambda_k\in \KK[X_1,\dots,X_s]$ for all $k\in\{1,\dots,m\}$, and $\mathrm{Exp}(F)=\mathrm{max}_{k}\mathrm{Exp}(\lambda_k\bar{S}_k)$.  Moreover, as $F$ is $L$-homogeneous, it follows that $\lambda_k$ is $L$-homogeneous and 
$L(\lambda_k)=L(F)-L(\bar{S}_k)$ for all $k\in\{1,\dots,m\}$. As

 $$
 \sum_{i=1}^lc_{\underline{\theta}^i}f_1^{\theta^i_1}\cdots  f_s^{\theta^i_s}=\sum_{k=1}^m\lambda_k(f_1,\dots,f_s)S_k
 $$
 
\noindent for all $i\in\lbrace 1,\dots,l\rbrace, \mathrm{exp}(f_1^{\theta^i_1}\cdots  f_s^{\theta^i_s},a)=\alpha$ (*),  and consequently, 
$\displaystyle{a\cdot\alpha}=\sum_{j=1}^s\theta_j^i\nu(f_j,a)=L(X_1^{\theta_1^i}\ldots X_s^{\theta_s^i})$. We claim that for all nonzero monomial  $c_{\gamma}X_1^{\gamma^k_1}\ldots X_s^{\gamma^k_s}$ of $\lambda_k$, if $\bar{S}_k=X_1^{\rho^k_1}\ldots X_s^{\rho^k_s}-X_1^{\xi^k_1}\ldots X_s^{\xi^k_s}$, then

$$
\alpha=\mathrm{exp}((f_1^{\gamma^k_1}\ldots f_s^{\gamma^k_s})(f_1^{\rho^k_1}\ldots f_s^{\rho^k_s}),a)=\mathrm{exp}((f_1^{\gamma^k_1}\ldots f_s^{\gamma^k_s})(f_1^{\xi^k_1}\ldots f_s^{\xi^k_s}),a).
$$

\noindent We shall use to this end the division of $F$ by $\lbrace \bar{S}_1,\dots,\bar{S}_m\rbrace$. Suppose, without loss of generality, that $\mathrm{Exp}(F)=(\theta_1^1,\dots,\theta_s^1)=\mathrm{Exp}(\lambda_1\bar{S}_1)$, and let $\mathrm{M}(\lambda_1)=cX_1^{\gamma_1}\ldots X_s^{\gamma_s}, \mathrm{M}(\bar{S}_1)=X_1^{\rho_1}\ldots X_s^{\rho_s}$, and $\bar{S}_k=X_1^{\rho_1}\ldots X_s^{\rho_s}-X_1^{\xi_1}\ldots X_s^{\xi_s}$. 
Finally let $F^1=F-\mathrm{M}(\lambda_1)S_1$. As 

$$
c_{\theta^1}X_1^{\theta^1_1}\ldots X_s^{\theta^1_s}=cX_1^{\gamma_1}\ldots X_s^{\gamma_s}X_1^{\rho_1}\ldots X_s^{\rho_s}
$$

\noindent we get

$$
c_{\theta^1}f_1^{\theta^1_1}\ldots f_s^{\theta^1_s}=cf_1^{\gamma_1}\ldots f_s^{\gamma_s}f_1^{\rho_1}\ldots f_s^{\rho_s}.
$$

\noindent Whence $\alpha=\mathrm{exp}(f_1^{\theta^1_1}\ldots f_s^{\theta^1_s})=\mathrm{exp}(f_1^{\gamma_1}\ldots f_s^{\gamma_s})+\mathrm{exp}(f_1^{\rho_1}\ldots f_s^{\rho_s})$, and by the definition of $S$-polynomials, $\alpha=\mathrm{exp}(f_1^{\theta^1_1}\ldots f_s^{\theta^1_s})=\mathrm{exp}(f_1^{\gamma_1}\ldots f_s^{\gamma_s})+\mathrm{exp}(f_1^{\xi_1}\ldots f_s^{\xi_s})$. As $F^1$ satisfies the condition (*), we get our claim by an easy induction.  Now we shall prove that for all $k\in\lbrace 1,\dots,m\rbrace$, we have $\mathrm{exp}(\lambda_k(f_1,\dots,f_s)S_k,a)<_a \alpha$. Let to this end $cX_1^{\delta_1}\ldots X_s^{\delta_s}$ be a nonzero monomial of $\lambda_k$, and write $\bar{S}_k=X_1^{\zeta_1}\ldots X_s^{\zeta_s}-X_1^{\tau_1}\ldots X_s^{\tau_s}$. In particular $S_k=f_1^{\zeta_1}\ldots f_s^{\zeta_s}-f_1^{\tau_1}\ldots f_s^{\tau_s}$. As  

\medskip

$$
\alpha=\mathrm{exp}((cf_1^{\delta_1}\ldots f_s^{\delta_s})(f_1^{\zeta_1}\ldots f_s^{\zeta_s}),a)=\mathrm{exp}((cf_1^{\delta_1}\ldots f_s^{\delta_s})(f_1^{\tau_1}\ldots f_s^{\tau_s}),a)
$$

\noindent  and $\mathrm{exp}(S_k,a)<_a\mathrm{exp}(f_1^{\zeta_1}\ldots f_s^{\zeta_s},a)=\mathrm{exp}(f_1^{\tau_1}\ldots f_s^{\tau_s},a)$, we get

$$
\mathrm{exp}(cf_1^{\delta_1}\ldots f_s^{\delta_s}S_k),a)<_a\mathrm{exp}(cf_1^{\delta_1}\ldots f_s^{\delta_s})(f_1^{\zeta_1}\ldots f_s^{\zeta_s}),a)=\mathrm{exp}(cf_1^{\delta_1}\ldots f_s^{\delta_s})(f_1^{\tau_1}\ldots f_s^{\tau_s}),a)=\alpha.
$$

\noindent This proves our assertion.


\noindent  From the hypothesis, 
$ R_a(S_k,\lbrace f_1,\dots,f_s\rbrace)=0$ for all $k\in\lbrace 1,\dots,m\rbrace$. It follows from Theorem \ref{division} that
we can write  $S_k$ as $S_k=\sum_{\underline{\beta}^k}c_{\underline{\beta}^k} f_1^{\beta^k_1}\cdots f_s^{\beta^k_s}$ 
with $\displaystyle{\mathrm{max}_{\beta_k, c_{\beta^k}\not=0}\mathrm{exp}(f_1^{\beta^k_1}\cdots f_s^{\beta^k_s},a)= \mathrm{exp}(S_k,a)}$. Replacing  
$\displaystyle{\sum_{i=1}^lc_{\underline{\theta}^i}f_1^{\theta^i_1}\cdots  f_s^{\theta^i_s}}$ by 

$$
\displaystyle{\sum_{k=1}^m\lambda_k(f_1,\dots,f_s)\sum_{\underline{\beta}^k}c_{\underline{\beta}^k} f_1^{\beta^k_1}\cdots}f_s^{\beta^k_s}
$$

\noindent  in 
the expression of $R$, we can rewrite $R$ as $R=\sum_{\underline{\theta}'}c_{\underline{\theta}'}f_1^{\theta'_1}\cdots f_s^{\theta'_s}
$  with $\alpha_1=\mathrm {max}_{\theta', c_{\theta'}\not=0}\mathrm{exp}(f_1^{\theta'_1}\cdots$\\ $ f_s^{\theta'_s},a) <_a\alpha$. Then 
we restart with this expression. We construct this way an infinite sequence $\alpha >_a\alpha_1>_a\dots >_a\mathrm {exp}(R,a) $. 
This contradicts Lemma \ref{Dixon2}. $\blacksquare$
\end{proof}

\medskip

\noindent The characterization given in Proposition \ref{S-polynomials} suggests an algorithm that constructs, 
starting with a set of generators of ${\bf A}$, an $a$-canonical basis of ${\bf A}$. More precisely we have the following.






\medskip

\noindent {\bf Algorithm}. Let ${\bf A}={\mathbb K}[\![f_1,\dots,f_s]\!]$ and let $a\in U_1^n$. Suppose that $\mathrm{exp}({\bf A})$ is finitely generated and let $\lbrace \bar{S}_1,\dots,\bar{S}_m\rbrace$ 
be a set of generators of the map $\phi$ of Lemma \ref{binomial}. Let $S_i=\bar{S}_i(f_1,\dots,f_s)$ for all $i\in\lbrace 1,\dots,m\rbrace$.

\medskip

\begin{enumerate}
    \item If $R_a(S_i,\lbrace f_1,\dots,f_s\rbrace)=0$ for all $i\in\lbrace 1,\dots,m\rbrace$, then $\lbrace f_1,\dots,f_s\rbrace$ is an $a$-canonical basis of ${\bf A}$.
    
    \item If $R_a(S_i,\lbrace f_1,\dots,f_s\rbrace)\not=0$ for some $i\in\lbrace 1,\dots,m\rbrace$, then 
we add the set  of all the nonzero remainders and we restart with the new system of generators, say $\lbrace f_1,\dots,f_s,f_{s+1},\dots,f_{s+s_1}\rbrace$. Note that in this case, 
we have
 $\langle \mathrm {exp}(f_1,a),\dots,\mathrm {exp}(f_s,a)\rangle \subset \langle \mathrm {exp}(f_1,a),\dots,
\mathrm {exp}(f_{s+s_1},a)\rangle \subseteq \mathrm {exp}({\bf A},a)$. Let us prove that this process will stop.
By hypothesis, $\mathrm{exp}({\bf A},a)$ is finitely generated.  Let $\lbrace \gamma_1,\dots,\gamma_r\rbrace$ be a minimal set of generators 
of $\mathrm{exp}({\bf A},a)$ and suppose that $\nu(\underline{x}^{\gamma_1},a)\leq \cdots \leq \nu(\underline{x}^{\gamma_r},a)$.   Suppose that the process above gives infinitely many 
new elements $f_{s+k},k\geq 1$. Then there exists   $t\geq 1$ such that  $\nu(f_{s+k},a)>\nu(\underline{x}^{\gamma_r},a)$ for all $k> t$  (in fact, the set of $\nu(\underline{x}^{\alpha},a), \alpha\in{\mathbb N}^n$ is finite). We claim that $\lbrace f_1,\dots,f_{s+t}\rbrace$ is an $a$-canonical basis of ${\bf A}$. Suppose otherwise,  then $\gamma_j\notin \lbrace \mathrm{exp}(f_1,a),\dots,\mathrm{exp}(f_{s+t},a)\rbrace$ for some $j\in \lbrace 1,\dots,r\rbrace$, and let $f\in{\bf A}$
 such that $\mathrm{exp}(f,a)=\gamma_j$.  As ${\bf A}={\mathbb K}[\![f_1,\dots,f_s,f_{s+1},\dots,f_{s+t}]\!]$, we can write 
 $f=\sum_{\alpha}c_{\alpha} f_1^{\alpha_1}\cdots f_{s+t}^{\alpha_{s+t}}$. We shall use a similar argument as in Proposition \ref{S-polynomials}. Let 
$\alpha_0=\mathrm{max}_{\alpha, c_{\alpha}\not=0}\mathrm{exp}(f_1^{\alpha_1}\dots f_{s+t}^{\alpha_{s+t}},a)$  and recall that this maximum is well defined. Let $\lbrace \alpha^1,\dots,\alpha^p\rbrace$ be the set of elements 
such that $\alpha_0=\mathrm{exp}(f_1^{\alpha^k_1}\dots f_{s+t}^{\alpha^k_{s+t}},a)$ for all $k\in\lbrace 1,\dots,p\rbrace$.
 We have $\gamma_j\leq_a \alpha_0$. If $\gamma_j=\alpha_0=\mathrm{exp}(f_1^{\alpha^k_1}\dots f_{s+t}^{\alpha^k_{s+t}},a)$, then it follows from the minimality
of $\gamma_j$ that $\gamma_j=\mathrm{exp}(f_l,a)$ for some $l\in\lbrace 1,\dots,s+t\rbrace$. This contradicts the hypothesis.  Whence $\gamma_j<_a\alpha_0$, and $\displaystyle{\sum_{k=1}^{p}c_{\alpha^k}\mathrm{M}(f_1^{\alpha^k_1}\dots f_{s+t}^{\alpha^k_{s+t}}, a)=0}$.  Let $\lbrace \hat{S}_1,\dots,\hat{S}_q\rbrace$ be
 the set of $S$-polynomials associated with the set $\lbrace f_1,\dots,f_{s+t}\rbrace$. As in Proposition \ref{S-polynomials}, we can rewrite $f$ as follows

 $$
 f=\sum_{k=1}^q\lambda_k(f_1,\dots,f_{s+t})\hat{S}_k+f^1
 $$

\noindent with $f^1\in{\bf A}$, and  $\alpha_0>_a\mathrm{max}(\mathrm{exp}(f^1,a), \mathrm{exp}(\sum_{k=1}^q\lambda_k(f_1,\dots,f_{s+t})\hat{S}_k,a)$. By 
Theorem \ref{division},
for all $k$, there exist $F_k, R_k\in{\bf A}$ such that $\hat{S}_k=F_k+R_k$ with $\mathrm{exp}(\hat{S}_k,a)=\mathrm{exp}(F_k,a)$ if $F_k\not=0$, and
either $R_k=0$ or $R_k=f_{s+l}$ for some $l>t$, in particular 
$\nu(R_k)>\nu(\underline{x}^{\gamma_r},a)$. If we write explicitly $F_k$ as in Theorem \ref{division}, we get 

$$
f=\sum_{\alpha'} c_{\alpha'} f_1^{\alpha'_1}\cdots f_{s+t}^{\alpha'_{t}}+\sum_k R_k
$$

\noindent and if $\alpha_1=\mathrm{max}_{\alpha',c_{\alpha'}\not=0}\mathrm{exp}(f_1^{\alpha'_1}\cdots f_{s+t}^{\alpha'_{s+t}},a)$, then
 $\alpha_0>_a\alpha_1>_a\mathrm{exp}(f,a)$. Moreover,
$\nu(R_k,a)>\nu(\underline{x}^{\gamma_r},a)>\nu(f,a)$. Then we restart with this new expression. We construct this way an infinite increasing sequence 
$\alpha_0>_a\alpha_1>_a\cdots>_a \mathrm{exp}(f,a)=\gamma_j$. This  contradicts Lemma \ref{Dixon2}.
\end{enumerate}

\section{Finiteness criterion of canonical bases}

\medskip

\noindent Let the notations be as in Section 1. In particular ${\bf A}={\mathbb K}[\![f_1,\dots,f_s]\!]$ with $\lbrace f_1,\dots,f_s\rbrace \subseteq \CC$.  Let $a\in U^n$. In the following we shall give some
criteria for $\mathrm{exp}({\bf A},a)$ to be finitely generated (or equivalently $\M({\bf A},a)$ is a finitely generated ${\mathbb K}$-algebra). Note first that $\CC$ being an ${\bf A}$-module, the quotient $\CC/{\bf A}$ is  an ${\bf A}$-module, which is also a 
${\mathbb K}$-vector space. The same holds for  $\CC/\mathrm{M}({\bf A},a)$.  With these notations we have the following.

\begin{lema}\label{vectorspace} Let $a\in U^n$ and suppose that $\mathrm{exp}({\bf A},a)$ is finitely generated. Let $\lbrace f_1,\dots,f_r\rbrace$ be the $a$-reduced
 canonical basis
of ${\bf A}$. The map 
$$
\theta: \CC/{\bf A}\to \CC/\mathrm{M}({\bf A},a)
$$
\noindent defined by $\theta(\overline{f})=\overline{R_a(f,\lbrace f_1,\ldots,f_s\rbrace)}$ is an isomorphism of ${\mathbb K}$-vector spaces.
\end{lema}

\begin{proof} It is easy to verify that $\theta$ is well defined and also that it is a linear map. Clearly $\theta$ is surjective, and it follows from Proposition \ref{criterion} that $\mathrm{Ker}(\theta)=0$. This proves our assertion. $\blacksquare$

\end{proof}

\medskip

\noindent Let the notations be as above, and let $a\in U^n$. Let $i\in\lbrace 1,\dots,n\rbrace$, and define   $T_i=\mathrm{exp}_i({\bf A},a)$
 to be the set of $s \in{\mathbb N}$ such that $x_i^{s}\in\M({\bf A},a)$. Clearly $0\in T_i$, and $T_i$ is a subsemigroup of ${\mathbb N}$. We have the following result which will be used in Section 3.

\begin{proposicion}\label{nsg} Let $a\in U^n$ and let  the notations be as above. If $\mathrm{rank}_{\mathbb K}\CC/\mathrm{M}({\bf A},a)<+\infty$, 
then the following conditions hold.

\begin{enumerate}
\item For all $i\in\lbrace 1,\dots,n\rbrace$, $T_i$ is a numerical semigroup of ${\mathbb N}$, i.e. ${\mathbb N}\setminus T_i$ is a finite set.

\item We have the inclusion $T_1\times \cdots \times T_n\subseteq \mathrm{exp}({\bf A},a)$.

\item  $\mathrm{exp}({\bf A},a)$ is finitely generated, i.e. $e({\bf A},a)<+\infty$.
\end{enumerate}
\end{proposicion}
\begin{proof} 1. If  $\mathrm{rank}_{\mathbb K}\CC/\mathrm{M}({\bf A},a)<+\infty$, then 
$\mathrm{Card}({\mathbb N}^n\setminus\mathrm{exp}({\bf A},a))<+\infty$ (where Card stands for the cardinality). But 
$\mathrm{Card}({\mathbb N}\setminus T_i)\leq \mathrm{Card}({\mathbb N}^n\setminus\mathrm{exp}({\bf A},a))$ for all $i\in\lbrace 1,\dots,n\rbrace$, hence 
$\mathrm{Card}({\mathbb N}\setminus T_i)<+\infty$. 

\medskip

\noindent 2. Obvious. 

\medskip

\noindent 3. See \cite{CGU}, Proposition 2.3. $\blacksquare$
\end{proof}

\medskip

\noindent As as corollary we get the following.

\begin{proposicion}\label{finite-finite} With the notations above, if $\mathrm{rank}_{\mathbb K}\CC/\mathrm{M}({\bf A},a_0)<+\infty$
 for some $a_0\in U^n$, then $\mathrm{rank}_{\mathbb K}\CC/\mathrm{M}({\bf A},a)<+\infty$ for all $a\in U^n$. In particular
 $\mathrm{exp}({\bf A}, a)$ is finitely generated for all $a\in U^n$.
\end{proposicion}
\begin{proof} It follows from Lemma \ref{vectorspace} that $\mathrm{rank}_{\mathbb K}\CC/{\bf A}<+\infty$. Whence, by the same lemma,
 $\mathrm{rank}_{\mathbb K}\CC/\mathrm{M}({\bf A},a)<+\infty$ for all $a\in U^n$. In particular, by  Proposition \ref{nsg},
  $\mathrm{exp}({\bf A}, a)$ is finitely generated for all $a\in U^n$.$\blacksquare$
\end{proof}

\medskip

\medskip

\noindent Next we recall a finiteness criterion given in \cite{rs}.

\begin{proposicion} \label{integral}(see \cite{rs}, Proposition 4.7. and 4.9.) Let ${\bf A}$ be as above and let $a\in U^n$. If for all $i\in\lbrace 1,\dots,n\rbrace$, there exists  $\alpha_i\in{\mathbb N}\setminus\lbrace 0\rbrace$ such that $x_i^{\alpha_i}\in\M({\bf A},a)$, then  $\mathrm{exp}({\bf A}, a)$ is finitely generated. 

\end{proposicion}

\begin{proof} The hypothesis is equivalent to saying that ${\bf F}$ is integral over $\M({\bf A},a)$ (see \cite{rs}, 4.9.). The result follows by applying \cite{am}, Proposition 7.8.   to the inclusions ${\mathbb K}\subseteq \M({\bf A},a)\subseteq  \CC$. $\blacksquare$

\end{proof}

\medskip

\noindent Let the notations be as above, and suppose that ${\bf F}$ is integral over $\M({\bf A},a)$. For all $i\in\lbrace 1,\dots,n\rbrace$, let $\alpha_i\in{\mathbb N}\setminus\lbrace 0\rbrace$ such that $x_i^{\alpha_i}\in\M({\bf A},a)$.  Then the cone $C=\lbrace  \sum_{i=1}^n\lambda_i\alpha_i \mid \lambda_i\in {\mathbb R}_+\rbrace$ is nothing but ${\mathbb R}_+^n$, and every element of $\mathrm{exp}({\bf A},a)$
is in this cone.  Thinking of this approach, Proposition \ref{integral} can be adapted to a more general setting. Recall first the following lemma.

\begin{lema} (see \cite{sc}, for example)\label{hb} Let $\lbrace \alpha_1,\ldots,\alpha_s\rbrace$ be a subset of ${\mathbb N}^n$ and let $C=\lbrace  \sum_{i=1}^n\lambda_i\alpha_i \mid \lambda_i\in {\mathbb R}_+\rbrace$ be the cone generated by  $\lbrace \alpha_1,\ldots,\alpha_s\rbrace$. There exists a subset $\lbrace \beta_1,\dots,\beta_r\rbrace$ of $C\cap {\mathbb N}^n$ such that for all $\beta\in C\cap {\mathbb N}^n$, there exist $\lambda_1,\dots,\lambda_s\in {\mathbb N}$, and $j\in\lbrace 1,\dots,r\rbrace$, such that $\beta=\sum_{i=1}^s\lambda_i\alpha_i+\beta_j$.
\end{lema}

\begin{proof} Let $P=\lbrace \sum_{i=1}^s\lambda_ia_i\mid 0\leq \lambda_i<1$ for all $1\leq i\leq s\rbrace$, and let 
$\lbrace \beta_1,\dots,\beta_r\rbrace=P\cap{\mathbb N}^n$. Let $\beta\in C\cap{\mathbb N}^n$. We have 

$$
\beta-\sum_{i=1}^s\left\lfloor \lambda_i\right\rfloor\alpha_i=\sum_{i=1}^s\{\lambda_i\}\alpha_i\in P
$$

\noindent where $\left\lfloor x\right\rfloor$ denotes the integer part of $x$, and $\{x\}$ its fractional part. As $\sum_{i=1}^s\{\lambda_i\}\alpha_i\in P\cap{\mathbb N}^n$, this finishes the proof. $\blacksquare$
\end{proof}

\begin{proposicion}\label{cone} Let ${\bf A}={\mathbb K}[\![f_1,\dots,f_s]\!]$ be as above and let $a\in U^n$. For all $i\in \lbrace 1,\dots,s\rbrace$, let 
$\M(f_i,a)=c_{\alpha_i}\underline{x}^{\alpha_i}$. 
Let $C=\lbrace  \sum_{i=1}^s\lambda_i\alpha_i \mid \lambda_i\in {\mathbb R}_+\rbrace$ be the cone generated by $\lbrace \alpha_1,\dots,\alpha_s\rbrace$, and 
let ${\bf F}_C=\lbrace f\in \CC \mid \mathrm{Supp}(f)\subseteq C\rbrace$ be the ring of polynomials whose supports are in $C$.  If $\M({\bf A},a)\subseteq \CC_C$, then
$\mathrm{exp}({\bf A},a)$ is finitely generated.
\end{proposicion}

\begin{proof} Let the notations be as in Lemma \ref{hb}. In particular $\lbrace \beta_1,\dots,\beta_r\rbrace=P\cap{\mathbb N}^n$. It follows from the proof of the same lemma that 
$\CC_C={\mathbb K}[\![\underline{x}^{\beta_1},\dots,\underline{x}^{\beta_r},\underline{x}^{\alpha_1},\dots,\underline{x}^{\alpha_s}]\!]$, and if
 ${\bf B}={\mathbb K}[\![\underline{x}^{\alpha_1},\dots,\underline{x}^{\alpha_s}]\!]$, then  for all $\underline{x}^{\beta}\in \CC_C$, $\underline{x}^{\beta}\in \underline{x}^{\beta_i}{\bf B}$
 for some $i\in \lbrace 1,\dots,r\rbrace$.  In particular
$\CC_C$ is a finite $\M({\bf A},a)$-module, since it is generated by $\lbrace \underline{x}^{\beta_1},\dots,\underline{x}^{\beta_r}\rbrace$, hence   it is integral over  $\M({\bf A},a)$, and the result follows by applying \cite{am}, Proposition 7.8. to
the inclusions ${\mathbb K}\subseteq \M({\bf A},a)\subseteq \CC_C$. $\blacksquare$
\end{proof}
\medskip

\noindent The following lemma gives a criterion for $\mathrm{exp}({\bf A},a), a\in U^n$, to be in a fixed cone.

\begin{lema}\label{cone1} Let ${\bf A}={\mathbb K}[\![f_1,\dots,f_s]\!]$ be as above and let $a\in U^n$. For all $i\in \lbrace 1,\cdots,s\rbrace$, let 
$\M(f_i,a)=c_{\alpha_i}\underline{x}^{\alpha_i}$. 
Let $C$ be the cone generated by $\lbrace \underline{x}^{\alpha_1},\dots,\underline{x}^{\alpha_s}\rbrace$. If $\mathrm{Supp}(f_i,a)\subseteq C$ for 
all $i\in\lbrace 1,\dots,s\rbrace$, 
then $\mathrm{exp}({\bf A},a)\subseteq C$. In particular, by Proposition \ref{cone},  $\mathrm{exp}({\bf A},a)$ is finitely generated. 

\end{lema}

\begin{proof} Let $f\in{\bf A}$, and write $f=\sum_{\alpha}c_{\alpha}f_1^{\alpha_1}\dots f_s^{\alpha_s}$. For all $\alpha$,  $\mathrm{Supp}(f_1^{\alpha_1}\dots f_s^{\alpha_s})\subseteq C$. Then
$\mathrm{exp}(f,a)\in C$. This finishes the proof. $\blacksquare$

\end{proof}

\medskip

\noindent In the following we shall give some relevant examples (the well ordering here is the lexicographical order).

\begin{example} Let ${\bf A}={\mathbb K}[\![x+y,xy]\!]$ and let $a=(1,2)$.  Then $\mathrm{M}(x+y,a)=x$, and $\M(xy,a)=xy$. Hence
$\lbrace x+y,xy\rbrace$ is the $a$-reduced canonical basis of ${\bf A}$. If $C$ is the cone generated by $(1,0),(1,1)$, then $\mathrm{Supp}(x+y)\not\subset C$. This
proves that the converse of Lemma \ref{cone1} is not true.
\end{example}

\begin{example} (see \cite{rs}) \label{notfinite} Let ${\bf A}={\mathbb K}[\![x+y,xy,xy^2]\!]$ and let $a=(1,2)$. We have 
$\M(x+y,a)=x, \M(xy,a)=xy$, and $\M(xy^2,a)=xy^2$. The kernel of the map:

$$
\phi:{\mathbb K}[X,Y,Z]\to {\mathbb K}[x,y], \phi(X)=x,\phi(Y)=xy,\phi(Z)=xy^2
$$

\noindent is generated by $\bar{S_1}=XZ-Y^2$. We have $S=(x+y)xy^2-x^2y^2=-xy^3=R_a(-xy^3,\lbrace x+y,xy,xy^2\rbrace)$. Hence
$xy^3$ is a new element of the  $a$-reduced canonical basis of ${\bf A}$. If we restart with the representation ${\bf A}={\mathbb K}[\![x+y,xy,xy^2,xy^3]\!]$, then a 
new element, $xy^4$, will be added to the $a$-canonical basis of ${\bf A}$. In fact, $xy^n$ is an element of the  $a$-reduced canonical basis of ${\bf A}$ 
for all $n\geq 1$. In particular the $a$-reduced canonical basis of ${\bf A}$ is infinite. In fact, we can verify that $\mathrm{exp}({\bf A},a)$ is not finitely generated for all $a\in U^n$.
Note that if $\M(x+y,a)=x$ (resp. $\M(x+y,a)=y$), then $(0,1)$ (resp. $(1,0)$) is not in the cone $C$ generated by $(1,0),(1,1),(1,2)$ (resp. $(0,1),(1,1),(1,2)$).

\end{example} 

\begin{example} Let ${\bf A}={\mathbb K}[\![x,xy+y^2,x^2y]\!]$. If $a=(2,1)$, then
$\M(x,a)=x, \M(xy+y^2,a)=y^2$, and $\M(x^2y,a)=x^2y$. The cone $C$ generated by $(1,0),(0,2),(2,1)$ is ${\mathbb R}_+^2$, and the 
hypotheses of Lemma \ref{cone1} are satisfied, hence $\mathrm{exp}({\bf A},a)$ is finitely generated. We  can verify
 that $\lbrace x, xy+y^2, x^2y\rbrace$ is the $a$-reduced canonical basis. In particular $\mathrm{exp}({\bf A},a)=\langle (1,0),(0,2),(2,1)\rangle$.\\
Let $a=(1,2)$. We have $\M(x,a)=x, \M(xy+y^2,a)=xy$, and $\M(x^2y,a)=x^2y$. We can verify that $xy^n$ is an element of the  $a$-reduced canonical basis of ${\bf A}$ for all $n\geq 0$. In particular the $a$-canonical basis of ${\bf A}$ is infinite. Note that in this case, the hypotheses of Lemma \ref{cone1} are not satisfied since $\mathrm{Supp}(xy+y^2)$ is not in the cone generated by 
$(1,0),(1,1),(2,1)$. 

\noindent It is worth noticing here that in both cases, ${\mathrm rank}_{\mathbb K}{\bf F}/{\bf A}$ is not finite (in the first case, $y^{2k+1}\notin{\bf A}$ for all $k\in{\mathbb N}$, and in the second case, $y^k\notin{\bf A}$ for all $k\in{\mathbb N}$ (see Proposition \ref{finite-finite}). 
\end{example}

\begin{example} \label{finitedimension}Let ${\bf A}={\mathbb K}[\![x,xy+y^2,y^3,x^2y]\!]$. For all $a\in U^n$,  
$\M(x,a)=x, \M(y^3,a)=y^3, \M(x^2y,a)=x^2y$, and $\M(xy+y^2,a)$ is either $xy$ or $y^2$, depending on  $(1,0) >_a (0,1)$ or $(0,1) >_a(1,0)$.

\begin{itemize}

\item If $(0,1) >_a(1,0)$, then  we can verify that $\lbrace x, y^2+xy, y^3,x^2y\rbrace$ is the $a$-reduced canonical basis of $\bf A$. In particular 
$\mathrm{exp}({\bf A},a)=\langle (1,0), (0,2),(0,3),(2,1)\rangle$, and ${\mathbb N}^2\setminus\mathrm{exp}({\bf A},a)=\lbrace (0,1),(1,1)\rbrace$.

\item If   $(1,0) >_a (0,1)$, then  we can verify that $\lbrace x,xy+y^2, xy^2,y^3,y^4,y^5\rbrace$ is the $a$-reduced canonical basis of $\bf A$. In particular 
$\mathrm{exp}({\bf A},a)=\langle (1,0), (1,1),(1,2), (0,3),(0,4),(0,5)\rangle$, and ${\mathbb N}^2\setminus\mathrm{exp}({\bf A},a)=\lbrace (0,1),(0,2)\rbrace$.  
\end{itemize} 
\end{example}

\begin{example} 1. (see \cite {agm}) Suppose that $n=1$, i.e. ${\bf A}={\mathbb K}[\![f_1(t),\dots,f_s(t)]\!]\subseteq {\mathbb K}[\![t]\!]$. The parametrization
 ($X_1=f_1(t),\dots,X_s=f_s(t)$) 
represents the expansion of a curve in ${\mathbb K}^s$ near the origin. In this case, $\mathrm{exp}({\bf A},a)$ does not 
depend on $a\in U^n$. If $s=2$ and ${\mathbb N}\setminus \mathrm{exp}({\bf A}, a)<+\infty$, then 
$\mathrm{exp}({\bf A},a)$ is a free numerical semigroup (see \cite{ag} for the definition), and the arithmetic of this semigroup contains a lot of information about the singularity of the curve at the origin.

\medskip

\noindent 2. Let $f(X_1,\dots,X_n,Y)\in{\mathbb K}[\![X_1,\dots,X_n]\!][Y]$ and suppose that 
$f$ has a parametrization of the form $X_1=t_1^{e_1},\dots, X_n=t_n^{e_n}, Y=Y(t_1,\cdots,t_n)\in{\mathbb K}[\![t_1,\dots,t_n]\!]$ (for 
instance, this is true if $f$ is a quasi-ordinary polynomial, i.e. the $Y$-resultant of $f$  with its $Y$-derivative $f_Y$ is of the 
form $X_1^{N_1}\cdots X_s^{N_s}(c+\phi(X_1,\dots,X_s))$ with $c\in{\mathbb K}^*$ and $\phi(0,\dots,0)=0$). We have 
$\dfrac{{\mathbb K}[\![X_1,\dots,X_n]\!][Y]}{f}\simeq {\bf A}={\mathbb K}[\![t_1^{e_1},\dots,t_n^{e_n},Y(t_1,\dots,t_n)]\!]$. In this case, 
$(e_1,0,\dots,0),\dots,(0,\dots,0,e_n)$ belong to $\mathrm{exp}({\bf A},a)$ for all $a\in U^n$. Moreover, ${\rm exp}({\bf A}, a)$  is a free 
 affine semigroup in the sense of \cite{a}.  

\end{example}


\section{A finiteness Theorem}

\noindent Let the notations be as in Section 1. In particular ${\bf A}={\mathbb K}[\![f_1,\dots,f_s]\!]$ with $\lbrace f_1,\dots,f_s\rbrace \subseteq \CC$. The aim of this
 section is to prove, under some additional hypotheses, that the set $\mathrm{M}({\bf A})=\lbrace \mathrm{M}({\bf A},a) \mid a\in U^n\rbrace$ (resp. the set 
$\mathrm{I}({\bf A})=\lbrace \mathrm {in}(A,a) \mid a\in U^n\rbrace$) is finite. Recall that, given a nonzero element $f\in\CC$ and $a\in U^n$, we have 
$\mathrm{M}(f,a)=c_{\mathrm{exp}(f,a)}\underline{x}^{\mathrm{exp}(f,a)}$. In particular, for all $a\in U^n$, we have 
$\mathrm{M}({\bf A},a)={\mathbb K}[\![\mathrm{M}(f_1,a),\dots,\mathrm{M}(f_s,a)]\!]$ if and only if 
$\mathrm{exp}({\bf A},a)=\langle \mathrm{exp}(f_1,a),\dots,\mathrm{exp}(f_s,a)\rangle$. \\
We 
shall first consider the case when ${\bf A}={\mathbb K}[\![f]\!]$. Then we 
prove some preliminary results which will  also be used later in the paper. We start with the following definition:

\begin{definicion}\label{newton} Let $f=\sum_{\alpha}c_{\alpha}\underline{x}^{\alpha}$ be a nonzero element of ${\mathbb K}[\![x_1,\dots,x_n]\!]$ and let
 $E=\bigcup_{\alpha\in \mathrm {Supp}(f)}(\alpha+{\mathbb N}^n)$. We 
have $E+{\mathbb N}^n\subseteq E$, and consequently, by Dickson's Lemma, there exists a unique finite set of 
$E$, say $\lbrace \alpha_1,\dots,\alpha_r\rbrace$, such that $E=\bigcup_{i=1}^r \alpha_i+{\mathbb N}^n$. We set $\mathrm{N}(f)=\lbrace \alpha_1,\dots,\alpha_r\rbrace$, and 
$
f_N=\sum_{i=1}^rc_{\alpha_i}\underline{x}^{\alpha_i}.
$

\end{definicion}

\begin{lema}\label{onegenerator} Let $f$ be a nonzero element of ${\mathbb K}[\![x_1,\dots,x_n]\!]$. The set $\lbrace \mathrm {M}(f,a) \mid a\in U^n\rbrace$
 $\mathrm{(}$resp. $\lbrace \mathrm {in}(f,a) \mid a\in U^n\rbrace$$\mathrm{)}$ is finite.
\end{lema}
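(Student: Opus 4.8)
The statement reduces to a finiteness claim about a single power series $f=\sum c_\alpha \underline{x}^\alpha$, so the plan is to understand how $\nu(f,a)$, and hence $\mathrm{in}(f,a)$, depends on $a$ as $a$ ranges over $U^n$. The key observation, already recorded in the paragraph following the definition of $\mathrm{in}(f,a)$ in the introduction, is that $\mathrm{in}(f,a)$ is always equal to $f_{\triangle}$ for some compact face $\triangle$ of the Newton polyhedron $\Gamma_+(f)$, and $\Gamma_+(f)$ has only finitely many compact faces $\triangle_1,\dots,\triangle_t$. Thus $\{\mathrm{in}(f,a)\mid a\in U^n\}\subseteq\{f_{\triangle_i}\mid 1\le i\le t\}$ is automatically finite. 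For $\mathrm{M}(f,a)$, note that $\mathrm{M}(f,a)=c_{\e(f,a)}\underline{x}^{\e(f,a)}$ where $\e(f,a)=\max_\prec \mathrm{Supp}(\mathrm{in}(f,a))=\max_\prec \mathrm{Supp}(f_{\triangle_i})$ for the relevant $i$; so $\mathrm{M}(f,a)$ takes at most $t$ values as well, one for each compact face.

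\textbf{Steps.} First I would recall (or re-derive) the identification $\{\mathrm{in}(f,a)\mid a\in U^n\}=\{f_{\triangle_i}\mid 1\le i\le t\}$: for $a\in U^n$, the linear functional $\alpha\mapsto \langle a,\alpha\rangle$ attains its minimum over $\mathrm{Supp}(f)$ (equivalently over $\Gamma_+(f)$, since $a$ has strictly positive coordinates so the minimum over $\Gamma_+(f)$ is attained on a compact face) precisely on the set $\mathrm{Supp}(f)\cap\triangle$ for the compact face $\triangle$ of $\Gamma_+(f)$ on which $a$ is minimized; this gives $\mathrm{in}(f,a)=f_{\triangle}$. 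Since $\Gamma_+(f)$ is a polyhedron, its set of faces is finite, and in particular the set of compact faces $\{\triangle_1,\dots,\triangle_t\}$ is finite; this proves the $\mathrm{in}$-part. Second, for the $\mathrm{M}$-part, observe that $\mathrm{M}(f,a)$ is determined by $\mathrm{in}(f,a)$ together with the fixed ordering $\prec$: namely $\e(f,a)$ is the $\prec$-largest exponent occurring in $\mathrm{in}(f,a)$, and $\mathrm{M}(f,a)=c_{\e(f,a)}\underline{x}^{\e(f,a)}$. Hence the map $\mathrm{in}(f,a)\mapsto\mathrm{M}(f,a)$ is well-defined on the finite set $\{f_{\triangle_i}\}$, so $\{\mathrm{M}(f,a)\mid a\in U^n\}$ is finite as well.

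\textbf{Main obstacle.} The only point requiring care is the passage from "minimum over $\mathrm{Supp}(f)$" to "minimum over a compact face of $\Gamma_+(f)$". Since $\mathrm{Supp}(f)$ may be infinite, one must check that for $a\in U^n$ the infimum $\nu(f,a)=\inf\{\langle a,\alpha\rangle\mid \alpha\in\mathrm{Supp}(f)\}$ is actually attained and equals the minimum of $\langle a,\cdot\rangle$ over $\Gamma_+(f)$. This follows because $a\in U^n$ means all coordinates of $a$ are strictly positive, so $\langle a,\cdot\rangle$ is proper and bounded below on $\mathbb{R}^n_+$, the set $\{\alpha\in\Gamma_+(f)\mid\langle a,\alpha\rangle\le N\}$ is compact for every $N$, and the minimum is attained on a compact face; moreover every vertex of $\Gamma_+(f)$ lies in $\mathrm{Supp}(f)$, so the minimum over $\Gamma_+(f)$ is attained at a point of $\mathrm{Supp}(f)$, giving $\nu(f,a)=\min_{\triangle}\langle a,\cdot\rangle$ for the relevant compact face $\triangle$ and $\mathrm{in}(f,a)=f_\triangle$. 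Once this is granted, the finiteness is immediate from the finiteness of the face lattice of the polyhedron $\Gamma_+(f)$.
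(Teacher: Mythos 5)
Your proof is correct, but it takes a genuinely different route from the paper's. You argue through the Newton polyhedron: for $a\in U^n$ the linear form $a$ attains its minimum over $\Gamma_+(f)$ on a compact face $\triangle$, whence $\mathrm{in}(f,a)=f_{\triangle}$, and since $\Gamma_+(f)$ has only finitely many compact faces the set of initial forms is finite; the $\M$-part then follows because $\M(f,a)$ is a function of $\mathrm{in}(f,a)$ (take the $\prec$-largest exponent of its support). The paper instead applies Dickson's lemma directly to the support: the staircase $E=\bigcup_{\alpha\in\mathrm{Supp}(f)}(\alpha+\NN^n)$ has finitely many minimal generators $\alpha_1,\ldots,\alpha_r$, all lying in $\mathrm{Supp}(f)$, and since $a$ has strictly positive entries any exponent realizing $\nu(f,a)$ must be one of them, so $\mathrm{Supp}(\mathrm{in}(f,a))\subseteq\lbrace\alpha_1,\ldots,\alpha_r\rbrace$ and both sets in the statement are finite. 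The two arguments are close cousins: your appeal to ``$\Gamma_+(f)$ is a polyhedron, so it has finitely many compact faces'' is exactly the point that needs justification when $\mathrm{Supp}(f)$ is infinite, and it ultimately rests on the same Dickson-type finiteness of minimal support elements (the paper does state this fact without proof in the introduction, so citing it is defensible, but you should flag that this is where the real finiteness input lives). Your route has the advantage of treating the $\mathrm{in}$-part directly and geometrically, and of identifying the possible initial forms as the face polynomials $f_{\triangle_i}$; the paper's route is more elementary and self-contained, and incidentally sidesteps a small overstatement in the paper's own proof (the claimed equality $\lbrace \e(f,a)\mid a\in U^n\rbrace=\lbrace\alpha_1,\ldots,\alpha_r\rbrace$ is in general only an inclusion, which is all that finiteness requires).
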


\begin{proof} Write $f=\sum_{\alpha} c_{\alpha}{\underline{x}}^{\alpha}$ and let the notations be as in Definition \ref{newton}.  Given $a\in U^n$, it follows from the definition 
of $\mathrm{N}(f)$ that for all $\alpha\in\mathrm{Supp}(f)$, there exists $\alpha_k\in\mathrm{N}(f)$ and
$\beta\in {\mathbb N}^n$ such that $\alpha=\alpha_k+\beta$. In particular $\nu(\underline{x}^{\alpha}, a)\geq \nu(\underline{x}^{\alpha_k},a)$ with inequality if $\alpha\not=\alpha_k$.
 Whence $\mathrm{in}(f,a)=\mathrm{in}(f_N,a)$ and $\mathrm{exp}(f,a)=\mathrm{exp}(f_N,a)$. As $f_N$ is a polynomial, the result follows immediately. $\blacksquare$
\end{proof}

\medskip


\medskip

\begin{lema}\label{rationalone} Let $f=\sum_{\alpha} c_{\alpha}x^{\alpha}$ be a nonzero element of ${\mathbb K}[\![x_1,\dots,x_n]\!]$ and let $a\in U^n$. There 
exists $b\in U_1^n$ such that $\mathrm{in}(f,a)=\mathrm{in}(f,b)$ and $\mathrm{exp}(f,a)=\mathrm{exp}(f,b)$.  
\end{lema}

\begin{proof} Let the notations be as in Definition \ref{newton}. In particular $f_N=\sum_{i=1}^rc_{\alpha_i}\underline{x}^{\alpha_i}$. Let 
$T=\mathrm{Supp}(\mathrm{in}(f,a))$ and
let $d$ be the greatest number of  elements of $T$ that are linearly independent.  Assume, without loss of generality, that 
$T=\lbrace \alpha_{1},\dots,\alpha_d,\dots,
\alpha_{l}\rbrace$, where $\alpha_1,\dots,\alpha_d$ are linearly independent.  We have 
$\nu=\nu(f,a)=\nu(\underline{x}^{\alpha_{1}},a)=\cdots=\nu(\underline{x}^{\alpha_{l}},a)<\nu(\underline{x}^{\alpha_k},a)$ for all $k\geq l+1$.
 Let $0<t<\mathrm{min}_{k\geq l+1}(\nu(\underline{x}^{\alpha_k},a)-\nu)$. The main idea is that we can move the hyperplane  in such a way that the configuration of the points 
of $T$ does not change. More precisely we shall prove the existence of  $\epsilon\in {\mathbb R}_+^n$  such that the following conditions hold.
\begin{enumerate}

\item $a+\epsilon\in U_1^n$.

\item $\epsilon \cdot (\alpha_1-\alpha_{j})=0$ for all $j\in\lbrace 2,\dots,l\rbrace$ (hence $\nu(\underline{x}^{\alpha_1},a+\epsilon)=\cdots=\nu(\underline{x}^{\alpha_l},a+\epsilon)$).

\item $\epsilon\cdot (\alpha_1-\alpha_k)<t$ for all  $k\geq l+1$ (hence $\nu(\underline{x}^{\alpha_1},a+\epsilon)<\nu(\underline{x}^{\alpha_k},a+\epsilon)$ for all $k\geq l+1$).

\end{enumerate}


\noindent Let 
$L: {\mathbb R}^n\to {\mathbb R}^{l-1}, L(X)=(X\cdot (\alpha_1-\alpha_2),\dots, X\cdot (\alpha_1-\alpha_l))$. As the dimension of $\mathrm{Ker}(L)$ is greater than or equal to $1$ ($a\in \mathrm{Ker}(L)$), there exists $\epsilon=(\epsilon_1,\dots,\epsilon_n)\in  \mathrm{Ker}(L)\cap {\mathbb R}_+^n$, relatively small, such that $b=a+\epsilon\in U_1^n$, and
we can choose $\epsilon$ such that condition 3. is satisfied (note that there are infinitely many choices of such an $\epsilon$).  It follows that $\mathrm{in}(f,a)=\mathrm{in}(f,b)$ and $\mathrm{exp}(f,a)=\mathrm{exp}(f,b)$.  This proves our assertion.$\blacksquare$ 
\end{proof}

\begin{nota}\label{rationality} The proof of Lemma \ref{rationalone} can be generalized to a set of elements. More precisely, given $f_1,\dots,f_r\in{\mathbb K}[\![x_1,\dots,x_n]\!]$ and $a\in U^n$, there exists $b\in U_1^n$ such that $\mathrm{in}(f_i,a)=\mathrm{in}(f_i,b)$ for all $i\in\lbrace 1,\dots,r\rbrace$.$\blacksquare$
\end{nota}

\medskip

\noindent Next we prove that if $\mathrm{M}(A,a)\not=\mathrm{M}(A,b)$, then $\mathrm{M}(A,a)\not\subset\mathrm{M}(A,b)$.

\medskip

\begin{lema} \label{samebasis}Let $a\not=b$ be two elements of $U^n$. Assume that $\mathrm{M}({\bf A}, a)$
is finitely generated and  let $\lbrace g_1,\dots,g_r\rbrace$ be the $a$-reduced 
canonical basis of ${\bf A}$. If $\M({\bf A}, a)=\M({\bf A},b)$, then 
$\lbrace g_1,\dots,g_r\rbrace$ is also the $b$-reduced canonical basis of ${\bf A}$.
\end{lema}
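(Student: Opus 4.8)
The plan is to show that the defining properties (i)--(iii) of an $a$-reduced canonical basis transfer verbatim to $b$ once we know the leading-monomial algebras agree. First I would observe that $\mathrm{M}(g_i,a)=\mathrm{M}(g_i,b)$ for every $i$: indeed, since $\{g_1,\ldots,g_r\}$ is an $a$-canonical basis, $\mathrm{M}({\bf A},a)=\mathbb{K}[[\mathrm{M}(g_1,a),\ldots,\mathrm{M}(g_r,a)]]$, and the $\mathrm{M}(g_i,a)$ form a \emph{minimal} generating set of this monomial algebra; by hypothesis $\mathrm{M}({\bf A},b)=\mathrm{M}({\bf A},a)$, and a finitely generated monomial algebra has a unique minimal set of monomial generators (the argument is exactly the injective-maps argument used in the proof of Lemma \ref{reduced}), so $\{\mathrm{M}(g_1,b),\ldots,\mathrm{M}(g_r,b)\}$ must coincide as a set with $\{\mathrm{M}(g_1,a),\ldots,\mathrm{M}(g_r,a)\}$. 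It remains to match them index by index, which follows because for each $i$ the monomial $\mathrm{M}(g_i,b)$ lies in $\mathrm{Supp}(g_i)$ and, among monomials of $g_i$, only $\mathrm{M}(g_i,a)$ can be a minimal generator of $\mathrm{M}({\bf A},a)$ (the other monomials of $g_i$ avoid $\mathbb{K}[[\mathrm{M}(g_1,a),\ldots,\mathrm{M}(g_r,a)]]$ by (iii), so in particular they are not minimal generators of it).

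Once $\mathrm{M}(g_i,b)=\mathrm{M}(g_i,a)$ for all $i$, the three conditions are immediate. For (i): $\{\mathrm{M}(g_1,b),\ldots,\mathrm{M}(g_r,b)\}=\{\mathrm{M}(g_1,a),\ldots,\mathrm{M}(g_r,a)\}$ generates $\mathrm{M}({\bf A},b)=\mathrm{M}({\bf A},a)$ and is minimal there, so $\{g_1,\ldots,g_r\}$ is a minimal $b$-canonical basis. For (ii): $c_{\mathrm{exp}(g_i,b)}=c_{\mathrm{exp}(g_i,a)}=1$. For (iii): since $\mathrm{M}(g_i,b)=\mathrm{M}(g_i,a)$, we have $g_i-\mathrm{M}(g_i,b)=g_i-\mathrm{M}(g_i,a)$, so $\mathrm{Supp}(g_i-\mathrm{M}(g_i,b))=\mathrm{Supp}(g_i-\mathrm{M}(g_i,a))$, and every $\alpha$ in this set satisfies $\underline{x}^{\alpha}\notin\mathbb{K}[[\mathrm{M}(g_1,a),\ldots,\mathrm{M}(g_r,a)]]=\mathbb{K}[[\mathrm{M}(g_1,b),\ldots,\mathrm{M}(g_r,b)]]$ by the $a$-reducedness of the basis.

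The main obstacle is the first step --- pinning down that $\mathrm{M}(g_i,b)=\mathrm{M}(g_i,a)$ rather than merely that the two \emph{sets} of leading monomials coincide. The set-level statement is the uniqueness of minimal monomial generators, which is a clean combinatorial fact; the index-level refinement needs the observation that each $g_i$ contains exactly one monomial lying in the monomial algebra $\mathrm{M}({\bf A},a)$, namely $\mathrm{M}(g_i,a)$ itself, which is precisely what condition (iii) of $a$-reducedness guarantees. After that, everything is bookkeeping. One should also note in passing that $a,b\in U^n$ guarantees $\mathrm{M}(g_i,a)$ and $\mathrm{M}(g_i,b)$ are genuine monomials, so there is no issue with the definition of $\mathrm{M}$.
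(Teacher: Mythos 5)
Your core idea is exactly the paper's: condition (iii) of $a$-reducedness says that the only monomial of $g_i$ lying in the monomial algebra $\M({\bf A},a)$ is $\M(g_i,a)$ itself, and since $\M(g_i,b)\in\M({\bf A},b)=\M({\bf A},a)$ is also a monomial of $g_i$, it must equal $\M(g_i,a)$; the three reducedness conditions for $b$ then follow by bookkeeping, as you say. However, your preliminary ``set-level'' step is both unnecessary and, as written, circular: you invoke the uniqueness of the minimal monomial generating set of $\M({\bf A},a)=\M({\bf A},b)$ to conclude that $\lbrace \M(g_1,b),\ldots,\M(g_r,b)\rbrace$ coincides with $\lbrace \M(g_1,a),\ldots,\M(g_r,a)\rbrace$, but that inference requires already knowing that the $\M(g_i,b)$ form a (minimal) generating set of $\M({\bf A},b)$ --- which is precisely part of what the lemma asserts; a priori you only know each $\M(g_i,b)$ is an \emph{element} of $\M({\bf A},b)$. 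Fortunately that weaker fact is all you need: combined with (iii) it gives $\M(g_i,b)=\M(g_i,a)$ directly, which is the one-line argument the paper uses, and your closing remark shows you see this. Drop the detour through minimal generators and the proof is clean and matches the paper's.
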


\begin{proof} Let $i\in\lbrace 1,\dots,r\rbrace$ and write $g_i=\M(g_i,a)+\sum c_{\beta}\underline{x}^{\beta}$ where for 
all $\beta$, if $c_{\beta}\not=0$,  then $\underline{x}^{\beta}\notin \M({\bf A},a)$. Since $\M({\bf A}, a)=\M({\bf A},b)$, 
then $\underline{x}^{\beta}\notin \M({\bf A},b)$ for all $\beta$ such that $c_{\beta}\not=0$.  This implies that $\M(g_i,a)=\M(g_i,b)$.
 Hence $\lbrace g_1,\dots,g_r\rbrace$ is a $b$-canonical basis of ${\bf A}$, and the same argument shows that this basis is also reduced. $\blacksquare$
\end{proof}

\begin{lema} \label{expequal} Let $a\not=b$ be two elements of $U^n$ and assume that  $\mathrm{M}({\bf A}, b)$
is finitely generated. If $\M({\bf A},a)\not=\M({\bf A},b)$, then  $\M({\bf A},a)\not\subset \M({\bf A},b)$.
\end{lema}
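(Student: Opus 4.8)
The plan is to derive a contradiction from the assumption that $\M({\bf A},a)\subsetneq \M({\bf A},b)$, i.e.\ that $\M({\bf A},a)$ is a proper subalgebra of $\M({\bf A},b)$ while the two are not equal. The key point is that both algebras are monomial subalgebras of $\CC$, so $\M({\bf A},a)={\mathbb K}[[\underline{x}^{\gamma}\mid \gamma\in {\mathrm exp}({\bf A},a)]]$ and similarly for $b$, and these are governed entirely by the affine semigroups $S_a={\mathrm exp}({\bf A},a)$ and $S_b={\mathrm exp}({\bf A},b)$. The containment $\M({\bf A},a)\subseteq \M({\bf A},b)$ forces $S_a\subseteq S_b$, and I want to show $S_a\subsetneq S_b$ is impossible; equivalently the lengths $l(\CC/\M({\bf A},a))$ and $l(\CC/\M({\bf A},b))$ must coincide, which will pin down $S_a=S_b$ and hence, since a monomial algebra is determined by its exponent semigroup, $\M({\bf A},a)=\M({\bf A},b)$.

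First I would record that $l(\CC/\M({\bf A},a))=l(\CC/{\bf A})$ for every $a\in U^n$: passing to the $a$-graded (initial) algebra, or more directly noting that ${\mathbb K}$-bases of $\CC/{\bf A}$ and $\CC/\M({\bf A},a)$ are both indexed by the monomials $\underline{x}^{\beta}$ with $\beta\notin S_a$ (this is exactly the content of the division Theorem \ref{division}, whose remainders $R$ have support in $\NN^n\setminus S_a$ and which shows every $F$ is congruent mod ${\bf A}$ to a unique such $R$), so the finite number $l(\CC/{\bf A})$ equals $\#(\NN^n\setminus S_a)$, independently of $a$. In particular $\#(\NN^n\setminus S_a)=\#(\NN^n\setminus S_b)$, both finite. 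Combined with $S_a\subseteq S_b$ (from the assumed inclusion of monomial algebras), the complements satisfy $\NN^n\setminus S_b\subseteq \NN^n\setminus S_a$ with equal finite cardinality, hence $\NN^n\setminus S_a=\NN^n\setminus S_b$, i.e.\ $S_a=S_b$.

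Second, from $S_a=S_b$ I conclude $\M({\bf A},a)={\mathbb K}[[\underline{x}^{\gamma}\mid \gamma\in S_a]]={\mathbb K}[[\underline{x}^{\gamma}\mid \gamma\in S_b]]=\M({\bf A},b)$, contradicting the hypothesis $\M({\bf A},a)\neq\M({\bf A},b)$. Therefore the inclusion $\M({\bf A},a)\subseteq \M({\bf A},b)$ cannot hold when $\M({\bf A},a)\neq \M({\bf A},b)$, which is the assertion.

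The main obstacle, and the step deserving the most care, is the first one: establishing cleanly that $l(\CC/\M({\bf A},a))$ is finite and equal to $l(\CC/{\bf A})$, i.e.\ that ${\bf A}$ and $\M({\bf A},a)$ have the same colength. One must check that the finiteness hypothesis $l(\CC/{\bf A})<+\infty$ genuinely transfers to the initial algebra — the honest argument is that the division process gives, for each residue class of $\CC/{\bf A}$, a canonical representative supported off $S_a$, so $\dim_{\mathbb K}\CC/{\bf A}=\#(\NN^n\setminus S_a)$, and then apply the same to ${\bf A}$ replaced by its own initial algebra (for which the $a$-canonical basis is a set of monomials, so the division is transparent). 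Once colength-invariance is in hand, the cardinality/complement argument that forces $S_a=S_b$ is immediate.
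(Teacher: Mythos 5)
Your argument is correct in substance but follows a genuinely different route from the paper's. The paper argues locally: assuming $\M({\bf A},a)\subseteq \M({\bf A},b)$ with the two unequal, it takes a $b$-reduced canonical basis $\lbrace g_1,\ldots,g_r\rbrace$ and picks $i$ with $\M(g_i,b)\notin \M({\bf A},a)$; reducedness places every non-leading monomial of $g_i$ outside $\M({\bf A},b)$, hence outside $\M({\bf A},a)$, so \emph{no} monomial of $g_i$ lies in $\M({\bf A},a)$, and in particular $\M(g_i,a)\notin\M({\bf A},a)$ --- contradicting $g_i\in{\bf A}$. You instead identify both algebras with the monomial algebras of the semigroups $S_a={\mathrm exp}({\bf A},a)$ and $S_b$, prove $\#(\NN^n\setminus S_a)=l(\CC/{\bf A})=\#(\NN^n\setminus S_b)$, and conclude $S_a=S_b$ by pigeonhole from $S_a\subseteq S_b$. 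Your route buys a stronger quantitative fact (the staircase complement has constant cardinality $l(\CC/{\bf A})$, independent of the weight), but it uses the finiteness of $l(\CC/{\bf A})$ essentially, whereas the paper's argument needs only the existence of a reduced basis and would survive even if that basis were infinite. One caveat you should address: your spanning step rests on Theorem \ref{division}, which the paper states only for $a\in U_1^n$, while the Lemma is invoked (e.g.\ in Corollary \ref{rational}) precisely for $a\in U^n\setminus U_1^n$. Either observe that the division still converges for irrational positive weights --- since all $a_i>0$, only finitely many $\alpha\in\NN^n$ satisfy $\langle a,\alpha\rangle\leq C$, so the $a$-values of the partial quotients tend to infinity and the limits exist $(x)$-adically --- or restrict the dimension count to rational weights and treat the irrational case separately. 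The paper's own proof has a milder version of the same implicit reliance (it takes a $b$-reduced basis for arbitrary $b\in U^n$), so this is a repairable defect rather than a fatal one, but in your argument it carries more weight because you need termination of the division of arbitrary elements of $\CC$, not merely the existence of a basis.
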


\begin{proof} Assume that $\M({\bf A},a)\subset \M({\bf A},b)$ and let 
 $\lbrace g_1,\dots,g_r\rbrace$ be 
the $b$-reduced canonical basis of $\bf A$. By hypothesis, there is 
$i\in\lbrace 1,\dots,r\rbrace$ such that  $\M(g_i,b)\in \M({\bf A},b)\setminus\M({\bf A},a)$.  Write $g_i=\M(g_i,b)+\sum c_{\beta}\underline{x}^{\beta}$.
For all $\beta$, if $c_{\beta}\not=0$, then $\underline{x}^{\beta}\notin \M({\bf A},b)$, hence $\underline{x}^{\beta}\notin \M({\bf A},a)$. This
 implies that for all $\alpha\in\mathrm{Supp}(g_i), \alpha\notin\mathrm{exp}({\bf A},a)$. This is a contradiction because $g_i\in \bf A$. $\blacksquare$
\end{proof}

\medskip

\noindent In the following we shall prove that it is enough to consider elements in $U_1^n$.

\begin{lema}\label{rational} If $a\in U^n\setminus U_1^n$ and  $\mathrm{M}({\bf A}, a)$
is finitely generated,  then there exists $b\in U_1^n$ such that $\M({\bf A},a)=\M({\bf A},b)$.
\end{lema} 

\begin{proof} Let 
 $\lbrace g_1,\dots,g_r\rbrace$ be 
the $a$-reduced canonical basis of $\bf A$. As $r<+\infty$, we can find,  using Remark \ref{rationality}, $b\in U_1^n$
 such that  $\M(g_i,a)=\M(g_i,b)$ for all $i\in\lbrace 1,\dots,r\rbrace$. It follows that  $\M({\bf A},a)\subseteq \M({\bf A},b)$, whence, by Lemma 
\ref{expequal}, $\M({\bf A},a)=\M({\bf A},b)$. $\blacksquare$
\end{proof}

\medskip

\noindent Next we shall generalize to initial forms the result of Lemma \ref{expequal}. Recall first that given  a subalgebra ${\bf B}$ of $\CC$ and  
$a\in U^n$, ${\bf B}$ is said to be  $a$-homogeneous if ${\bf B}={\mathbb K}[\![G_1,\dots,G_{k},\ldots ]\!]$ where $G_i$ is 
$a$-homogeneous for all $i\geq 1$ (see Definition \ref{Homogeneous}). With these notations we have the following.


\begin{lema} \label{biho} Let  $a\in U_1^n$ and ${\bf B}={\mathbb K}[\![G_1,\dots,G_{s}]\!]$ be an $a$-homogeneous algebra, where we suppose that 
$G_i$ is $a$-homogeneous for all $i\in\lbrace 1,\dots,s\rbrace$. Let $F$ be a non zero element of ${\bf F}$ and let 
$F=\sum_{i}F_{d_i}, d_1<d_2<\cdots$ be the decomposition of
$F$ into $a$-homogeneous components. If $F\in {\bf B}$, then $F_{d_i}\in{\bf B}$ for all $i\geq 1$.

\end{lema}

\begin{proof} Write $F=\sum c_{\alpha}\prod_{i=1}^sG_i^{\alpha_i}$ and let $d=\mathrm{min}_{\alpha}\nu(\prod_{i=1}^sG_i^{\alpha_i},a)$. 
Let also $G=\sum_{d=\nu(\prod_{i=1}^sG_i^{\alpha_i},a)} c_{\alpha}$\\$\prod_{i=1}^sG_i^{\alpha_i}$. We have 
$d_1\geq d$ and $d_1>d$  if and only if $G=0$. In this case we remove $G$ and we restart with $F-G$. Finally we may
assume that $d=d_1$, in particular $F_{d_1}=G\in {\bf B}$. Now we restart with $F-F_{d_1}$. Our assertion follows by an easy induction argument. $\blacksquare$

\end{proof}

\begin{lema}\label{equalin} Let $a\not=b$ be two elements of $U_1^n$ and assume that $\mathrm{M}({\bf A}, a)$
is finitely generated. If $\mathrm{in}({\bf A},a)\not=\mathrm{in}({\bf A},b)$, then
  $\mathrm{in}({\bf A},a)\not\subset \mathrm{in}({\bf A},b)$

\end{lema}

\begin{proof} Suppose that $\mathrm{ in}({\bf A},a)\subseteq \mathrm{ in}({\bf A},b)$, and let us prove that $\mathrm{in}({\bf A},a)=\mathrm{in}({\bf A},b)$.
As $\mathrm{exp}({\bf A},c)=\mathrm{exp}(\mathrm{in}({\bf A},c))$ for all $c\in U^n$, it follows that   $\mathrm{exp}({\bf A},a)\subseteq\mathrm{exp}({\bf A},b)$, 
hence, by Lemma  \ref{expequal}, 
$\mathrm{exp}({\bf A},a)=\mathrm{exp}({\bf A},b)$. Let  $\lbrace g_1,\dots,g_r\rbrace$ be the $a$-reduced canonical basis of ${\bf A}$. Then, by Lemma \ref{samebasis},
$\lbrace g_1,\dots,g_r\rbrace$ is also the $b$-reduced canonical basis of ${\bf A}$. In particular
 $\mathrm{in}({\bf A},a)={\mathbb K}[\![\mathrm{in}(g_1,a),\dots,\mathrm{in}(g_r,a)]\!]$ (resp.
$\mathrm{in}({\bf A},b)={\mathbb K}[\![\mathrm{in}(g_1,b),\dots,\mathrm{in}(g_r,b)]\!]$).
Let us prove that for all $i\in\lbrace 1,\ldots,r\rbrace, \mathrm{in}(g_i,a)=\mathrm{in}(g_i,b)$. Let $G= \mathrm{in}(g_i,a)$, and 
let $G=G_{d_1}+\cdots+G_{d_t}, d_1<\cdots<d_t$,  be the decomposition of 
$G$ into $b$-homogeneous elements. If $t>1$, then $G_{d_k}\in\mathrm{in}({\bf A},b)$ 
for all $k\in \lbrace 2,\dots,t\rbrace$, hence $\mathrm{exp}(G_{d_k},b)\in\mathrm{exp}({\bf A},b)$. But  $ \mathrm{Supp}(G_{d_2}+\cdots+G_{d_t})\cap \mathrm{exp}({\bf A},b)=\emptyset$. This
is a contradiction. Hence $t=1$ and  $\mathrm{in}(g_i,a)=G_{d_1}=\mathrm{in}(g_i,b)$. This  finishes the proof. $\blacksquare$
\end{proof}

 \medskip
 
\noindent We can now state and prove the finiteness theorem. We shall first prove the following proposition.

\begin{proposicion}\label{finite} Let ${\bf A}={\mathbb K}[\![f_1,\dots,f_s]\!]$ and assume that for all 
$a\in U^n$, $\mathrm{M}({\bf A}, a)$ is finitely generated. Assume that
$\mathrm{M}({\bf A})=\lbrace \mathrm{M}({\bf A},a)\mid a\in U^n\rbrace$ is infinite. Given $e\in{\mathbb N}\setminus\lbrace 0\rbrace$, there exists
 $b\in U^n$ such that  $e({\bf A},b)\geq e$.
\end{proposicion}

\begin{proof} Suppose that $\M({\bf A})$ is an infinite set, and let $E=\lbrace a_1,\dots,a_k,\dots\rbrace\in U^n$ such that $\M({\bf A},a_i)\not=\M({\bf A},a_j)$ for all $a_i\not=a_j$. By Lemma \ref{rational}, we may assume that $E\subseteq U_1^n$. Let $T=\lbrace m_1,\dots,m_s\rbrace$ be the set of monomials such that for all $i\in\lbrace 1,\dots,s\rbrace$ and for all $a_k\in E$, $m_i$ is a minimal generator of $\M({\bf A},a_k)$ (the set $T$ is either empty or a finite set). Set $m_i=c_{\alpha_i}\underline{x}^{\alpha_i}$ and let $J_1={\mathbb K}[\![m_1,\dots,m_s]\!]$. Let
$$
\bar{\triangle}=\lbrace \alpha\in{\mathbb N}^n \mid \underline{x}^{\alpha}\notin J_1\rbrace.
$$
\noindent The set $\lbrace f\in {\bf A}\mid \mathrm{Supp}(f)\subseteq \bar{\triangle}\rbrace$ is not empty, otherwise for all $i\in {\mathbb N}, \M({\bf A},a_i)=J_1$, which contradicts the hypothesis. Next we shall use the following
notation: if $m=c_{\alpha}\underline{x}^{\alpha}$ is a monomial of ${\bf  F}$, then we set $\mid m\mid=\sum_{i=1}^n\alpha_i$. Let $G_1$ be the set of
 monomials $m=c\underline{x}^{\alpha}$ in ${\bf F}$ such that the following conditions hold.
\begin{enumerate}
\item There exists $f\in{\bf A}$ such that $\alpha\in\mathrm{Supp}(f)$, and 
$\mathrm{Supp}(f)\subseteq \bar{\triangle}$.
\item There exists an infinite set $\tilde{E}_1\subseteq  E$ such that for all $a\in \tilde{E}_1$, $\mathrm{exp}(f,a)=\alpha$.
\end{enumerate}
\noindent As $\lbrace f\in {\bf A}\mid \mathrm{Supp}(f)\subseteq \bar{\triangle}\rbrace$ is not empty, Lemma \ref{onegenerator} implies that $G_1$ is not empty (in fact, given $f\in{\bf A}$, the set $\lbrace \mathrm{exp}(f,a)\mid a\in U^n\rbrace$ is finite by the same lemma). Let $m'_{s+1}$ be a monomial of
$G_1$ with $\mid m'_{s+1}\mid$ minimal (such a monomial is not necessarily unique). If for all $a\in \tilde{E}_1$, $m'_{s+1}$ is a minimal generator of $\mathrm{M}({\bf A}, a)$, then we set $m'_{s+1}=m_{s+1}, T_1=T\cup\lbrace m_{s+1}\rbrace, E_1=\tilde{E}_1$, and we restart with $T_1,E_1$. Suppose that $m'_{s+1}$ is not a minimal generator of $\mathrm{M}({\bf A}, a)$ for some  $a\in \tilde{E}_1$. Then $m'_{s+1}$ is a product of  minimal generators of $\mathrm{M}({\bf A}, a)$. Every such generator $\underline{x}^{\gamma}$ satisfies $\mid \gamma\mid < \mid m'_{s+1}\mid$, whence the set of these generators is  finite. Let $\gamma\notin\lbrace \alpha_1,\cdots,\alpha_s\rbrace$ be such a generator. There are two possible cases.

\begin{enumerate}

\item There is an infinite subset $\hat{E}_1\subseteq \tilde{E}_1$ such that $\gamma$ is a minimal generator 
 of $\mathrm{exp}({\bf A},a), a\in\hat{E}_1$. In this case we add $\underline{x}^{\gamma}$ to the set $T$.

\item The element $\gamma$ is a minimal generator of at most a finite set of $\mathrm{exp}({\bf A},a), a\in\hat{E}_1$. In this case we remove this set from $\tilde{E}_1$.

\end{enumerate}

\noindent If we do this operation with the set of all minimal generators $\underline{x}^{\gamma}, \gamma\notin\lbrace \alpha_1,\cdots,\alpha_s\rbrace$, we obtain an infinite set $E_1\subseteq E$, and a set $T=\lbrace m_1,\cdots,m_s,m_{s+1},\cdots,m_{s+s_1}\rbrace$ such that for all $i\in\lbrace 1,\cdots,s+s_1\rbrace$, and for all $a\in E_1$, $m_i$ is a minimal generator of $\mathrm{M}({\bf A}, a)$. Then we restart with $T_1$ and $E_1$.  As $e$ is an integer, we get this way  
 $b\in U^n$ such that $\mathrm{M}({\bf A},b)$ 
 is generated by at least $e$ elements. This proves our assertion. $\blacksquare$
\end{proof}
\medskip
\noindent Next we prove that if $\M({\bf A})$ is finite, then so is ${\mathrm I}({\bf A})$. 



\begin{lema} \label{M-I} Let the notations be as in Proposition \ref{finite}. If $\M({\bf A})$ is a finite set, then $\mathrm{I}({\bf A})$ is also a finite set.
\end{lema}

\begin{proof} Assume that  ${\mathrm I}({\bf A})$ is an infinite set. Then there exists an infinite subset $E_1=\lbrace a_1,a_2,\dots \rbrace$ such that for all $i\not=j, \mathrm{I}({\bf A},a_i)\not=\mathrm{I}({\bf A},a_j)$, and for all $k\geq 1$, $\mathrm{M}({\bf A},a_k)=\mathrm{M}({\bf A},a_1)$. Let
$\lbrace g_1,\dots,g_s\rbrace$ be the $a_1$-reduced canonical basis of ${\bf A}$. Then $\lbrace g_1,\dots,g_s\rbrace$ is the $a_k$-reduced canonical basis of ${\bf A}$ for all $k\geq 2$. In particular
$\mathrm{in}({\bf A},a_k)={\mathbb K}[\![\mathrm{in}(g_1,a_k),\dots,\mathrm{in}(g_s,a_k)]\!]$ for all $k\geq 1$. But Lemma \ref{onegenerator} implies that 
the set $\lbrace \mathrm{in}(g_i,a)\mid a\in E_1\rbrace$ is finite for all $i\in\lbrace 1,\dots,s\rbrace$. This is impossible because otherwise we  would have  infinitely many pairwise distinct sets inside the finite set $\cup_{i=1}^s\lbrace \mathrm{in}(g_i,a_k)\mid a_k\in E_1\rbrace$. $\blacksquare$

\end{proof}

\begin{teorema}\label{finiteness} Let $\lbrace f_1,\dots,f_s\rbrace$ be a set of nonzero elements of $\CC$ and let ${\bf A}={\mathbb K}[\![f_1,\dots,f_s]\!]$. Assume that there exists $n_0\in{\mathbb N}$ such that $e({\bf A},a)\leq n_0$ for all $a\in U^n$. The set
$\mathrm {M}({\bf A})=\lbrace \mathrm{M}({\bf A},a)\mid a\in U^n\rbrace$ (hence $\mathrm{exp}({\bf A})=\lbrace \mathrm{exp}({\bf A},a)\mid a\in U^n\rbrace$) is finite. In particular,
the set $\mathrm{I} ({\bf A})=\lbrace \mathrm {in}({\bf A},a)\mid a\in U^n\rbrace$ is finite.
 \end{teorema}

\begin{proof} By Lemma \ref{M-I}, we only need to prove that $\M(A)$ is a finite set, but this follows immediately from Proposition \ref{finite}. $\blacksquare$
\end{proof}


\begin{definicion} With the notations above, suppose that $\mathrm{M}({\bf A})$ is a finite set. The set $\lbrace g_1,\dots,g_r\rbrace$ of ${\bf A}$, which is an $a$-canonical basis of ${\bf A}$ for all $a\in U^n$, is called the universal canonical basis of ${\bf A}$.
 
\end{definicion}

\begin{nota} 1. We do not have a proof for Theorems \ref{finiteness}  for general subalgebras. We think however that the result of this theorem is true.

\medskip

\noindent 2.  If ${\bf A}={\mathbb K}[f_1,\dots,f_s]$ is a subalgebra of ${\bf P}={\mathbb  K}[x_1,\dots,x_n]$, then 
 Theorem \ref{finiteness}  remains valid when we vary $a\in {\mathbb R}^n_+$
 (in \cite{j} the author uses a similar argument as in Proposition \ref{finite}, however, the argument does not seem to suffice 
 without an extra hypothesis. Note that in this case, $\mathrm{in}(f,a)$ is a polynomial for all $a\in{\mathbb R}^n_+$ and for all $f\in \DD$). 
 \end{nota}

\medskip

\noindent As a corollary of Proposition \ref{finite-finite} we get the following.

\begin{corolario} With the notations above, if $d=\mathrm{rank}_{\mathbb K}\CC/\mathrm{M}({\bf A},a_0)<+\infty$ for some $a_0\in U^n$,
 then $e({\bf A},a)\leq 2^dn+2(2^d-1)$ for all 
$a\in U^n$. In particular $\M({\bf A})$ is a finite set.
\end{corolario}

\begin{proof}  We shall use Proposition 2.3. in \cite{CGU}. If $d=0$, then the standard basis  $\lbrace e_1,\dots,e_n\rbrace$ generates 
$\mathrm{exp}({\bf A},a_0)$.  Suppose that $d>0$ and  let $h$ be a maximal element of ${\mathbb N}^n\setminus \mathrm{exp}({\bf A},a_0)$ with respect to the natural partial order on ${\mathbb N}^n$. Then $S=\mathrm{exp}({\bf A},a_0)\cup\lbrace h\rbrace$ is an affine semigroup, and 
${\mathbb N}^n\setminus S$ has cardinality $d-1$. By induction we have $\mathrm{e}(S)\leq 2^{d-1}n+2(2^{d-1}-1)$. If $\lbrace v_1,\dots,v_s\rbrace$ is a minimal set of generators of $S$, then $\lbrace v_1,\dots,v_s,v_1+h,\dots,v_s+h,2h,3h\rbrace$ is a minimal set of generators of 
$\mathrm{exp}({\bf A},a_0)$. In particular $e({\bf A},a_0)=2e({\bf A},a_0)+2\leq 2(2^{d-1}n+2(2^{d-1}-1))+2=2^dn+2(2^d-2)+2=2^dn+2(2^d-1)$. This proves our assertion. As  $\mathrm{rank}_{\mathbb K}\CC/\mathrm{M}({\bf A},a)=d$ for all $a\in U^n$, this implies that $e({\bf A},a)\leq 2^dn+2(2^d-1)$, and by Theorem \ref{finite}, the set $\M({\bf A})$ is a finite set. $\blacksquare$
\end{proof}

\section{The Newton fan}

\medskip

\noindent Let the notations and hypotheses be as in Sections 1 to 3. In particular,  $\lbrace f_1,\dots,f_s\rbrace$ is a set of 
nonzero elements of $\CC$ and  ${\bf A}={\mathbb K}[\![f_1,\dots,f_s]\!]$. Assume that for all $a\in U^n$, $\mathrm{exp}({\bf A},a)$ 
is finitely generated, and that $\M({\bf A})=\lbrace \M({\bf A},a) \mid a\in U^n\rbrace$ is a finite set. We have the following.
\medskip

\begin{teorema}\label{fan} There exists a partition $\mathcal{P}$ of $U^n$ into convex rational polyhedral cones such that for all $\sigma\in\mathcal{P}$, $\mathrm{exp}({\bf A},a)$ and ${\mathrm in}({\bf A},a)$ do not depend on $a\in \sigma$.

\end{teorema}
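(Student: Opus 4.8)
The plan is to build the partition $\mathcal{P}$ from the finitely many possibilities for $\mathrm{in}({\bf A},a)$ guaranteed by Theorem \ref{finiteness}, and to show that each set $E_{\mathcal{I}}=\{a\in U^n\mid \mathrm{in}({\bf A},a)=\mathcal{I}\}$ — equivalently, by Remark \ref{rationalin}, each set where a fixed universal-type reduced canonical basis $\{g_1,\ldots,g_r\}$ is simultaneously the $a$-reduced canonical basis and has prescribed initial forms — is a finite union of relatively open convex rational polyhedral cones, whose closures tile $U^n$. First I would fix a finite set $\{g_1,\ldots,g_N\}\subseteq{\bf A}$ large enough that for every $a\in U^n$ some subset of it is an $a$-reduced canonical basis: this exists because $I({\bf A})$ is finite and each class contributes one reduced basis by Lemma \ref{reduced} and Remark \ref{in=exp}. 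Each $g_j=\sum_\alpha c^j_\alpha\underline{x}^\alpha$ has finite support, and for $\alpha,\beta\in\mathrm{Supp}(g_j)$ the condition ``$\langle a,\alpha\rangle$ versus $\langle a,\beta\rangle$'' cuts $U^n$ along the rational hyperplane $\langle a,\alpha-\beta\rangle=0$. The common refinement of all these finitely many hyperplanes (over all $j$ and all pairs in $\mathrm{Supp}(g_j)$) is a finite rational polyhedral fan $\mathcal{F}$ on $\RR^n$; intersecting its cones with $U^n$ gives a finite collection of relatively open rational cones covering $U^n$.

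The key step is then that $\mathrm{in}(g_j,a)$ — hence also $\mathrm{exp}(g_j,a)$ and $\M(g_j,a)$, the latter using the fixed well-ordering $\prec$ on the now-constant set $\mathrm{Supp}(\mathrm{in}(g_j,a))$ — depends only on which open cone $\sigma$ of $\mathcal{F}\cap U^n$ contains $a$: within $\sigma$ the sign of every $\langle a,\alpha-\beta\rangle$ is constant, so $\mathrm{Supp}(\mathrm{in}(g_j,a))=\{\alpha\in\mathrm{Supp}(g_j)\mid \langle a,\alpha-\beta\rangle\le 0\ \forall\beta\in\mathrm{Supp}(g_j)\}$ is constant on $\sigma$. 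Consequently the data ``which subsets of $\{g_1,\ldots,g_N\}$ are $a$-canonical bases, which are reduced, and what their initial ideals $\mathrm{in}({\bf A},a)$ and semigroups $\mathrm{exp}({\bf A},a)$ are'' are all determined by $\sigma$ alone — I would verify this by noting that the criterion of Proposition \ref{S-polynomials} and the division algorithm of Theorem \ref{division}, applied to the $S$-polynomials of a candidate basis, involve only finitely many arithmetic comparisons of $<_a$-values among supports of explicitly produced power series, and each such comparison is constant on $\sigma$ (here one may need, as in Corollary \ref{rational}, to pass to $U_1^n\cap\sigma$, which is dense in $\sigma$, run the algorithm there, and then extend by the open-cone argument of Corollary \ref{rational}). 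Thus $\mathrm{exp}({\bf A},a)$ and $\mathrm{in}({\bf A},a)$ are constant on each $\sigma\in\mathcal{F}\cap U^n$; taking $\mathcal{P}=\{\sigma\cap U^n : \sigma\in\mathcal{F}\}$, which by construction is a partition of $U^n$ into convex rational polyhedral cones, gives the theorem, and grouping the cones with a fixed value of $\mathrm{exp}({\bf A},a)$ equal to a given $S$ shows each $E_S$ is a union of such cones, recovering the fan statement of the introduction.

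The main obstacle I expect is the passage from ``each fixed $g_j$ has locally constant initial form'' to ``the whole algebra $\mathrm{in}({\bf A},a)$ is locally constant''. It is not immediate that a single finite set $\{g_1,\ldots,g_N\}$ suffices uniformly, nor that being a reduced canonical basis is detected by finitely many sign conditions: the division algorithm in Theorem \ref{division} is only guaranteed to terminate when $l({\bf F}/{\bf A})<\infty$ and a priori the number of steps — hence the number of comparisons — could vary with $a$. To handle this I would exploit finiteness of $l({\bf F}/{\bf A})$ more carefully: the chain $\langle\mathrm{exp}(f_1,a),\ldots\rangle\subsetneq\cdots\subseteq\mathrm{exp}({\bf A},a)$ in the Algorithm has length bounded independently of $a$ (bounded by $l({\bf F}/{\bf A})$), so the canonical-basis computation halts in a bounded number of steps; combined with Theorem \ref{finiteness} bounding the number of possible outputs, one gets a single finite $\{g_1,\ldots,g_N\}$ and a single finite list of sign conditions governing everything, and the refinement $\mathcal{F}$ is then the coarsest common refinement of those finitely many rational hyperplanes. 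The remaining verifications — that the cones are rational (all hyperplanes have integer normals $\alpha-\beta\in\ZZ^n$), convex, and genuinely partition $U^n$ — are routine.
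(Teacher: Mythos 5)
Your overall strategy --- reduce to the finitely many reduced canonical bases supplied by Theorem \ref{finiteness}, collect them into one finite set $\{g_1,\ldots,g_N\}$, and cut $U^n$ along the rational hyperplanes $\langle a,\alpha-\beta\rangle=0$ coming from the supports of the $g_j$ --- is sound and is essentially a globalized version of what the paper does (the paper works one $E_S$ at a time with the single reduced basis attached to that $S$, defines the equivalence ``same $\mathrm{in}(g_i,\cdot)$ for all $i$'', and checks convexity directly by the segment argument; your arrangement produces a refinement of that partition). Two supporting claims need a word of justification: the $g_j$ are a priori power series, so ``finite support'' is not automatic --- it holds because condition (iii) of reducedness places $\mathrm{Supp}(g_j-\M(g_j,a))$ inside ${\mathbb N}^n\setminus\mathrm{exp}({\bf A},a)$, which is finite when $l(\CC/{\bf A})<\infty$; and the existence of a single universal finite set follows from Theorem \ref{finiteness} together with the lemma that $\M({\bf A},a)=\M({\bf A},b)$ forces the $a$- and $b$-reduced bases to coincide.

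The genuine gap is in your key step, the passage from ``each $\mathrm{in}(g_j,\cdot)$ is constant on a cone $\sigma$'' to ``$\mathrm{in}({\bf A},\cdot)$ and $\mathrm{exp}({\bf A},\cdot)$ are constant on $\sigma$''. You propose to verify this by rerunning the criterion of Proposition \ref{S-polynomials} and the division of Theorem \ref{division} and observing that only ``finitely many comparisons'' occur, all decided by $\sigma$. That does not work as stated: a single division in Theorem \ref{division} may require infinitely many steps (the paper's proof explicitly handles this by passing to limits), the intermediate series $F^k$ and the remainders depend on $a$ and have supports that are not among the finitely many $\mathrm{Supp}(g_j)$ used to build your arrangement, and the bound $l(\CC/{\bf A})$ controls the number of new generators added by the Algorithm, not the number of comparisons inside one division. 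Hence the sign conditions actually governing the computation are not the ones your fan was refined along, and the argument becomes circular. The repair is short and is exactly the paper's mechanism: if $a,b\in\sigma$ and $G\subseteq\{g_1,\ldots,g_N\}$ is the $a$-reduced canonical basis, then constancy of $\M(g,\cdot)$ on $\sigma$ gives $\M({\bf A},a)={\mathbb K}[[\M(g,a):g\in G]]={\mathbb K}[[\M(g,b):g\in G]]\subseteq\M({\bf A},b)$, and Lemma \ref{expequal} upgrades this inclusion to an equality; then $G$ is also the $b$-reduced canonical basis, and $\mathrm{in}({\bf A},b)$ is generated by the (constant on $\sigma$) initial forms $\mathrm{in}(g,b)$, $g\in G$. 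With that substitution in place of the division-based verification, your proof goes through.
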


\noindent In order to prove Theorem \ref{fan} we start by fixing some notations.  Let $S$ be a finitely generated  subsemigroup of ${\mathbb N}^n$ and let 

$$
E_S=\lbrace a\in U^n\mid \mathrm{exp}({\bf A},a)=S\rbrace.
$$
Let $a\in E_S$ and let $\lbrace g_1,\dots,g_r\rbrace$ be the $a$-reduced canonical basis of ${\bf A}$. By Lemma \ref{samebasis},
 $\lbrace g_1,\dots,g_r\rbrace$ is also the $b$-reduced canonical basis of ${\bf A}$ for all $b\in E_S$. Denote by $\sim$ the
 equivalence relation on $U^n$ defined from $\lbrace g_1,\dots,g_r\rbrace$ by

$$
a\sim b \quad \text{if}\quad  \mathrm{in}(g_i,a)=\mathrm{in}(g_i,b) \quad \text {for  all}\quad i\in\lbrace 1,\dots,r\rbrace.
$$

\begin{proposicion} The relation $\sim$ defines on $U^n$ a finite partition, denoted $\mathcal{P_S}$, into convex rational polyhedral cones
and $E_S$ is a union of a part of these cones.

\end{proposicion}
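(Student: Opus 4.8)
The plan is to analyze the equivalence relation $\sim$ combinatorially, by reducing the conditions $\mathrm{in}(g_i,a)=\mathrm{in}(g_i,b)$ to a system of linear inequalities and equalities in the variable $a\in U^n$. First I would fix $i\in\{1,\ldots,r\}$ and write $g_i=\sum_\alpha c_\alpha\underline{x}^\alpha$ with support $\mathrm{Supp}(g_i)$, a \emph{finite} set since $g_i$ is (by the division/reduction process producing the reduced canonical basis) a polynomial, or at worst one may restrict attention to a finite truncation because only finitely many monomials can ever appear in $\mathrm{in}(g_i,a)$ for $a\in U^n$ (this is exactly the content of Lemma \ref{onegenerator}). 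The key observation is that $\mathrm{in}(g_i,a)$ is determined by which subset $T\subseteq\mathrm{Supp}(g_i)$ realizes the minimum of $\langle a,\alpha\rangle$: namely $T=\{\alpha\in\mathrm{Supp}(g_i)\mid \langle a,\alpha\rangle=\nu(g_i,a)\}$. For a fixed candidate subset $T$, the locus of $a\in U^n$ with this minimizing set is
$$
C_{i,T}=\{a\in U^n\mid \langle a,\alpha-\alpha'\rangle=0\ \forall\,\alpha,\alpha'\in T;\ \langle a,\alpha-\beta\rangle<0\ \forall\,\alpha\in T,\beta\in\mathrm{Supp}(g_i)\setminus T\},
$$
which is the relative interior of a rational polyhedral cone (intersected with the open orthant $U^n$). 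Thus the map $a\mapsto\mathrm{in}(g_i,a)$ is constant on each $C_{i,T}$, and $U^n$ is partitioned into the finitely many nonempty $C_{i,T}$; the common refinement over $i=1,\ldots,r$ gives a finite partition of $U^n$ into relative interiors of rational polyhedral cones on which all $\mathrm{in}(g_i,a)$ are simultaneously constant, i.e.\ the $\sim$-classes. That this family is a fan (faces of the closures meet along common faces) follows because each $C_{i,T}$ is cut out by a subset of the hyperplanes $\langle a,\alpha-\beta\rangle=0$ and the corresponding half-space choices, so the closures form the cones of the hyperplane-arrangement fan associated to the finite vector set $\{\alpha-\beta\}$, refined finitely many times.

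Next I would argue that $E_S$ is a union of some of these $\sim$-classes. By the setup preceding the proposition—using Lemma \ref{expequal}, Corollary \ref{equalin}, and Remark \ref{in=exp}—once $a\in E_S$ and $\{g_1,\ldots,g_r\}$ is the $a$-reduced canonical basis, the same set is the $b$-reduced canonical basis for every $b\in E_S$, so $E_S$ is saturated: the relation $\sim$ (built from this fixed $\{g_1,\ldots,g_r\}$) restricted to $E_S$ is well-defined, and if $a\in E_S$ and $a\sim b$ then $\mathrm{in}(g_i,b)=\mathrm{in}(g_i,a)$ for all $i$, whence $\mathrm{in}(\mathbf A,b)$ is generated by the $\mathrm{in}(g_i,b)$, so $\mathrm{exp}(\mathbf A,b)=\langle\mathrm{exp}(g_i,b)\rangle=\langle\mathrm{exp}(g_i,a)\rangle=S$, i.e.\ $b\in E_S$. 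Hence $E_S$ is a union of $\sim$-classes, and being a finite union of relative interiors of rational polyhedral cones, it is in particular a union of convex rational polyhedral cones as claimed.

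Finally I would record finiteness of the whole partition: by Theorem \ref{finiteness} there are only finitely many sets $\mathrm{in}(\mathbf A,a)$ as $a$ ranges over $U^n$, hence only finitely many semigroups $S=\mathrm{exp}(\mathbf A,a)$ and only finitely many distinct $\sim$-partitions arise; taking the common refinement of these finitely many finite partitions yields a single finite partition $\mathcal P$ of $U^n$ into relative interiors of rational polyhedral cones on which both $\mathrm{exp}(\mathbf A,a)$ and $\mathrm{in}(\mathbf A,a)$ are constant. The main obstacle I anticipate is purely bookkeeping rather than conceptual: making precise that one may replace each $g_i$ by a finite object before forming the cones $C_{i,T}$—i.e.\ checking that the monomials of $g_i$ that are ever ``active'' as $\mathrm{in}(g_i,a)$ vary with $a\in U^n$ form a finite set, so that only finitely many hyperplanes $\langle a,\alpha-\beta\rangle=0$ are relevant—and then verifying the closures genuinely fit together as a fan, not merely as a set-partition into cones. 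Both points follow from Lemma \ref{onegenerator} and the standard theory of hyperplane arrangements, but they deserve to be spelled out carefully.
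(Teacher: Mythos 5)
Your proof is correct in substance but takes a genuinely more constructive route than the paper. The paper's proof is soft: it checks directly that each $\sim$-class is convex (if $\mathrm{in}(g_i,c)=\mathrm{in}(g_i,d)$ then the same face of the Newton polyhedron is minimizing for every point of the segment $[c,d]$), that it is a cone (stability of $\mathrm{in}(g_i,\cdot)$ under scaling $a\mapsto t\cdot a$), and then imports rationality from Corollary~\ref{rational} and Remark~\ref{rationalin} and finiteness from Theorem~\ref{finiteness}. You instead realize each class explicitly as a cell of the common refinement of the normal fans of the Newton polyhedra $\Gamma_+(g_1),\ldots,\Gamma_+(g_r)$, which delivers convexity, polyhedrality, rationality and finiteness simultaneously and moreover exhibits the fan structure (closures meeting along faces), something the paper never makes explicit. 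The price is the bookkeeping you yourself flag: since the $g_i$ are power series, $\mathrm{Supp}(g_i)$ is infinite, and you must argue that the infinitely many strict inequalities $\langle a,\alpha-\beta\rangle<0$ collapse to finitely many; this is exactly the content of Lemma~\ref{onegenerator} (equivalently, that $\Gamma_+(g_i)$ is a rational polyhedron with finitely many compact faces), so the reduction is legitimate but should be spelled out. One further small gap: in the saturation argument the step ``whence $\mathrm{in}({\bf A},b)$ is generated by the $\mathrm{in}(g_i,b)$'' presupposes that $\lbrace g_1,\ldots,g_r\rbrace$ is a $b$-canonical basis, which is what is being established; the clean closure is to note that $a\sim b$ gives $S=\langle \mathrm{exp}(g_1,b),\ldots,\mathrm{exp}(g_r,b)\rangle\subseteq \mathrm{exp}({\bf A},b)$, i.e.\ $\M({\bf A},a)\subseteq\M({\bf A},b)$, and then Lemma~\ref{expequal} forces equality, hence $b\in E_S$. (The paper is equally terse at this point, so this is a refinement rather than a defect peculiar to your argument.)
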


\begin{proof} Let $c,d\in U^n$ such that $c\sim d$ and let $e\in [c, d]$. Let
$\theta \in [0, 1]$ such that $e= \theta c + (1-\theta)d$. We have $e\in U^n$, and  in$(g_i,e)=\mathrm{in}(g_i,c)=\mathrm{in}(g_i,d)$
by an immediate verification. Moreover, $c\sim t\cdot c$ for all $c\in U^n$ and $t>0$. Therefore 
the equivalence classes are convex rational polyhedral cones (the rationality results from Lemma \ref{rational}). On the 
other hand, if $c\sim d$ and $c\in E_S$, then $d\in E_S$ (in fact, for all $i\in \lbrace 1,\dots,r\rbrace$, $\mathrm{in}(g_i,c)=\mathrm{in}(g_i,d)$, whence $\mathrm{M}(g_i,c)=\mathrm{M}(g_i,d)$, in particular $\mathrm{M}({\bf A},c)\subseteq\mathrm{M}({\bf A},d)$, and by Lemma \ref{expequal},  
$\mathrm{M}({\bf A},c)=\mathrm{M}({\bf A},d)$). This proves that $E_S$ is a union of classes of $\sim$, the number of classes
being finite because we have $r$ elements $g_1,\dots,g_r$, and by Lemma \ref{onegenerator},  the set of $\mathrm{in}(g_i,a), a\in U^n$ is finite for all $i\in\lbrace 1,\dots,r\rbrace$. $\blacksquare$ \end{proof}

\medskip 
\noindent Next we  prove that $E_S$ is a convex set.

\begin{lema}\label{convex}  $E_S$ is a convex set: if $a\not=b\in E_S$, then $[a,b]\subseteq E_S$.
\end{lema}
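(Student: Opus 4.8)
The plan is to show that for $a,b\in E_S$ and any $e=\theta a+(1-\theta)b$ with $\theta\in[0,1]$, we have $e\in U^n$ and $\exp({\bf A},e)=S$. First I would fix the $a$-reduced canonical basis $\{g_1,\ldots,g_r\}$ of ${\bf A}$, which by Lemma \ref{expequal}, Corollary \ref{equalin} and Remark \ref{in=exp} is simultaneously the $b$-reduced canonical basis (since $a,b\in E_S$ forces $\exp({\bf A},a)=S=\exp({\bf A},b)$, hence $\M({\bf A},a)=\M({\bf A},b)$). The point $e$ lies in $U^n$ because $U^n$ is convex. It therefore suffices to prove that $\{g_1,\ldots,g_r\}$ is an $e$-canonical basis of ${\bf A}$ with $\exp(g_i,e)=\exp(g_i,a)$ for every $i$; this gives $\exp({\bf A},e)=\langle\exp(g_1,e),\ldots,\exp(g_r,e)\rangle=\langle\exp(g_1,a),\ldots,\exp(g_r,a)\rangle=S$.

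The key observation is a monotonicity-type fact for valuations along the segment: for a monomial $\underline{x}^\alpha$ one has $\nu(\underline{x}^\alpha,e)=\langle e,\alpha\rangle=\theta\langle a,\alpha\rangle+(1-\theta)\langle b,\alpha\rangle$, which is an affine function of $\theta$. Consequently, if a monomial $\underline{x}^\beta$ satisfies $\langle a,\alpha\rangle\le\langle a,\beta\rangle$ and $\langle b,\alpha\rangle\le\langle b,\beta\rangle$, then $\langle e,\alpha\rangle\le\langle e,\beta\rangle$ as well, and strict inequality at either endpoint is not required but is preserved only when both endpoints agree. Applying this to each $g_i$: since $\exp(g_i,a)=\exp(g_i,b)=:\alpha^i$, every other exponent $\beta$ in $\mathrm{Supp}(g_i)$ satisfies $\alpha^i<_a\beta$ and $\alpha^i<_b\beta$, hence either $\langle a,\alpha^i\rangle<\langle a,\beta\rangle$ or ($\langle a,\alpha^i\rangle=\langle a,\beta\rangle$ and $\alpha^i\prec\beta$), and similarly for $b$. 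Taking convex combinations of the numerical parts and using that $\prec$ is unchanged, one gets $\alpha^i<_e\beta$ for all such $\beta$, so $\exp(g_i,e)=\alpha^i$ and moreover $\mathrm{in}(g_i,e)$ involves only those $\beta$ with $\langle a,\alpha^i\rangle=\langle a,\beta\rangle=\langle b,\alpha^i\rangle=\langle b,\beta\rangle$; in particular $\M(g_i,e)=\M(g_i,a)$.

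It then remains to check that $\{g_1,\ldots,g_r\}$ is actually an $e$-canonical basis, i.e. that $\M({\bf A},e)=\mathbb K[[\M(g_1,e),\ldots,\M(g_r,e)]]=\mathbb K[[\M(g_1,a),\ldots,\M(g_r,a)]]=\M({\bf A},a)$. One inclusion $\mathbb K[[\M(g_1,a),\ldots]]\subseteq\M({\bf A},e)$ is clear since each $g_i\in{\bf A}$ and $\M(g_i,e)=\M(g_i,a)$. For the reverse, take any $f\in{\bf A}$; dividing $f$ by $\{g_1,\ldots,g_r\}$ using Theorem \ref{division} with respect to $<_e$ (valid since $e\in U_1^n$ when $a,b\in U_1^n$, and the general case reduces to the rational one via Corollary \ref{rational}/Remark \ref{rationalin} applied along the segment), and using Proposition \ref{criterion} together with the fact that the division remainder only involves monomials outside $\mathbb K[[\M(g_1,a),\ldots,\M(g_r,a)]]$, one concludes $\M(f,e)\in\mathbb K[[\M(g_1,a),\ldots,\M(g_r,a)]]$. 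Hence $\M({\bf A},e)\subseteq\M({\bf A},a)$, giving equality and therefore $\exp({\bf A},e)=S$, i.e. $e\in E_S$.

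The main obstacle I anticipate is the last step for $e$ possibly irrational: strictly speaking $\mathrm{in}(f,e)$ and $\exp(f,e)$ are only well-defined on $U_1^n$, so one must either argue that $E_S\cap U_1^n$ is dense in the relevant cone and the canonical-basis property is closed, or invoke Corollary \ref{rational} and Remark \ref{rationalin} to replace irrational points on the segment by nearby rational ones having the same $\M({\bf A},\cdot)$ and the same reduced canonical basis. Handling this cleanly — making sure the replacement can be done uniformly along the whole segment $[a,b]$ rather than just near the endpoints — is the delicate point; the monotonicity computation in the second paragraph is otherwise routine.
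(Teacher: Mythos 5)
Your first half coincides with the paper's argument: for $e=\theta a+(1-\theta)b$ the linear form $\alpha\mapsto\langle e,\alpha\rangle$ is the convex combination of the forms attached to $a$ and $b$, so, the $a$-reduced canonical basis $\lbrace g_1,\ldots,g_r\rbrace$ being also the $b$-reduced one and $\mathrm{exp}(g_i,a)=\mathrm{exp}(g_i,b)$ for each $i$, you correctly obtain $\mathrm{exp}(g_i,e)=\mathrm{exp}(g_i,a)$ and hence the inclusion $S=\langle \mathrm{exp}(g_1,a),\ldots,\mathrm{exp}(g_r,a)\rangle\subseteq \mathrm{exp}({\bf A},e)$.

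The gap is in the reverse inclusion $\mathrm{exp}({\bf A},e)\subseteq S$. Your division argument is circular: Proposition \ref{criterion} guarantees that the remainder of every $f\in{\bf A}$ vanishes \emph{provided} $\lbrace g_1,\ldots,g_r\rbrace$ is already known to be an $e$-canonical basis, which is precisely what you are trying to establish; from Theorem \ref{division} alone you only know that the monomials of the remainder lie outside ${\mathbb K}[[\M(g_1,e),\ldots,\M(g_r,e)]]$, not that the remainder is zero. Moreover, as you yourself note, the division process is only available for rational weights, and your proposed repair (replacing irrational points of the segment by nearby rational ones, uniformly along $[a,b]$) is left unresolved. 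None of this machinery is needed: Lemma \ref{expequal} says that two algebras $\M({\bf A},\cdot)$ are never comparable by strict inclusion, so the inclusion $\M({\bf A},a)\subseteq \M({\bf A},e)$ you have already proved forces $\M({\bf A},a)=\M({\bf A},e)$, i.e.\ $\mathrm{exp}({\bf A},e)=S$ and $e\in E_S$. This is exactly how the paper concludes. (Note also that $\mathrm{exp}(g_i,e)$ is well defined for every $e\in U^n$, rational or not, because $\mathrm{in}(g_i,e)$ is always a polynomial; the rationality issue only concerns the division algorithm, which the correct argument never invokes at $e$.)
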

\begin{proof} Let $a,b\in E_S$ and let $\lambda\in]0,1[$. Let $\lbrace g_1,\dots,g_r\rbrace$ be the $a$ (whence the $b$)
 reduced canonical basis of ${\bf A}$. Let $i\in\lbrace 1,\dots,r\rbrace$. We have  $\mathrm{M}(g_i,a)=\mathrm{M}(g_i,b)$ (otherwise, as  
 $\lbrace g_1,\dots,g_r\rbrace$  is a $b$-reduced caconical basis, $\mathrm{exp}(g_i,a)=\mathrm{exp}(g_i-\mathrm{M}(g_i,b),a)\notin \mathrm{exp}({\bf A},b)=\mathrm{exp}(g_i,a)$, which is a contradiction). Let $M={\rm in}(g_i,a)$. Write
 $M=M_1+\cdots+M_t$ where $M_k$ is $b$-homogeneous for all $k\in\lbrace 1,\dots,t\rbrace$ and $\nu(M_1,b)<\nu(M_k,b)$ for 
all $k\in\lbrace 2,\dots,t\rbrace$. In particular $\mathrm{exp}(g_i,a)=\mathrm{exp}(M_1,a)=\mathrm{exp}(M_1, b)={\rm exp}(g_i,b)$. We have $\nu(g_i,a)=\nu(M_1,a)=\nu(M_k,a)$ and  $\nu(M_1,b)<\nu(M_k,b)$ for all $k\in\lbrace 2,\dots,t\rbrace$. By the properties of the inner product we have $\nu(g_i,\theta a)=\nu(M_1,\theta a)=\nu(M_k,\theta a)$ and  $\nu(M_1,(1-\theta)b)<\nu(M_k,(1-\theta)b)$ for all $k\in\lbrace 2,\dots,t\rbrace$.This 
implies, by the same properties, that $\nu(g_i,\theta a+(1-\theta)b)=\nu(M_1,\theta a+(1-\theta)b)<\nu(M_k,\theta a+(1-\theta)b))$ for all $k\in\lbrace 2,\dots,t\rbrace$, hence 
$\mathrm{exp}(g_i,\theta a+(1-\theta)b)=\mathrm{exp}(g_i,a)={\rm exp}(g_i,b)$. In particular 
$\mathrm{exp}({\bf A},a)\subseteq \mathrm{exp}({\bf A},  \theta a+(1-\theta)b)$. By Lemma \ref{expequal} we get the equality. This 
proves that $\mathrm{exp}({\bf A},  \theta a+(1-\theta)b)=S$. $\blacksquare$
\end{proof}
\medskip

\noindent {\bf Proof of Theorem \ref{fan}} We define $\mathcal{P}$ in the following way: for each finitely 
generated  semigroup $S$ we consider the restriction ${\mathcal {P}}_S$ on $E_S$ of the above
 partition.  Then ${\mathcal {P}}$ is the finite union of the $\mathcal{P}_S$'s. On each cone of the partition, 
$\mathrm{in}({\bf A},a)$ and $\mathrm{exp}({\bf A},a)$ are fixed. Conversely assume that $\mathrm{in}({\bf A},a)$ and 
$\mathrm{exp}({\bf A},a)$ are fixed  and let $b$ such that $\mathrm{in}({\bf A},a)=\mathrm{in}({\bf A},b)$. By Lemma \ref{equalin}, the 
$a$-reduced canonical basis $\lbrace g_1,\dots,g_r\rbrace$ of ${\bf A}$ is also the $b$-reduced canonical basis of ${\bf A}$. Moreover,  
$\mathrm{in}(g_i,a)=\mathrm{in}(g_i,b)$ and $\mathrm{exp}(g_i,a)= \mathrm{exp}(g_i,b)$ for all $i\in\lbrace 1,\dots,r\rbrace$. By Lemma \ref{convex}, $E_S$ is a convex set. This ends 
the proof of the theorem. $\blacksquare$


\medskip 

\begin{definicion} ${\mathcal P}$ is called the standard fan of ${\bf A}$.
\end{definicion}



\medskip

\noindent  Next we shall characterize open cones with maximal dimension.  

\begin{definicion} \label{multi}Let $a\in U^n$. We say that $\mathrm{in}({\bf A},a)$ is a monomial algebra
 if it is generated by monomials. In this case, by Lemma \ref{biho}, if $g\in \mathrm{in}({\bf A},a)$, then every 
monomial of $g$ is also in $\mathrm{in}({\bf A},a)$. 
\end{definicion} 

\noindent Monomial elements of ${\bf A}$ have the following characterization.

\begin{lema}\label{monomial} Let $g\in{\bf A}$ and let $a\in U^n$. Then  $\mathrm{in}(g,a)$ is a monomial if and only if there exists 
$\epsilon >0$ such that $\mathrm{in}(g,b)=\mathrm{in}(g,a)$ for all $b\in  B(a,\epsilon)$, where $B(a,\epsilon)$ is the
 ball centered at $a$ of radius $\epsilon$.

\end{lema}

\begin{proof} Suppose that  $\mathrm{in}(g,a)$ is a monomial, and  let, with the notations of Lemma \ref{onegenerator}, $N(g)=\lbrace \alpha_1,\dots,\alpha_r\rbrace$, 
and suppose, without loss of generality, that  ${\mathrm M}(g,a)=c_{\alpha_1}\underline{x}^{\alpha_1}$.  We need to prove the existence of an $\epsilon>0$ such that
 for all $b\in U^n$, if $b\in B(a,\epsilon)$,  then $\mathrm{exp}(g,b)=\mathrm{exp}(g,a)=\alpha_1$. We have 
$\nu(\underline{x}^{\alpha_1},a)<\nu(\underline{x}^{\alpha_k},a)$ for all $k\in\lbrace 2,\dots,r\rbrace$. Let 
$-t=\nu(\underline{x}^{\alpha_1},a)-\nu(\underline{x}^{\alpha_2},a)=a\cdot (\alpha_1-\alpha_2)$. Let 
$\underline{\epsilon}\in{\mathbb R}_+^n$ and consider the equation
$$
(a+\underline{\epsilon})\cdot (\alpha_1-\alpha_2)=-t+\underline{\epsilon}\cdot (\alpha_1-\alpha_2).
$$

\noindent We need this quantity to be negative. This is 
equivalent to $\underline{\epsilon}\cdot (\alpha_1-\alpha_2)\leq t$. Let $\theta$ be the angle $(\underline{\epsilon},\alpha_1-\alpha_2)$. As  
$\underline{\epsilon}\cdot (\alpha_1-\alpha_2)=\|\underline{\epsilon}\| \|\alpha_1-\alpha_2\| \mathrm{cos}(\theta)$, the inequality is equivalent to 
$\|\underline{\epsilon}\|\mathrm{cos}(\theta)\leq \dfrac{t}{\|\alpha_1-\alpha_2\|}$. But $\mathrm{cos}(\theta)\leq 1$, whence we choose 
$\underline{\epsilon}$ q
such that $\|\underline{\epsilon}\| \leq  \dfrac{t}{\|\alpha_1-\alpha_2\|}$. Let $\epsilon^2=\mathrm{min}(\epsilon_1,\dots,\epsilon_n)$. For all 
$b\in B(a,\epsilon^2)$, there exists $\underline{\epsilon}\in {\mathbb R}_+^n$ such that $b=a+\underline{\epsilon}$, and by the argument above we have 
$(a+\underline{\epsilon})\cdot (\alpha_1-\alpha_2)<0$. If we repeat this argument with $\alpha_3,\dots,\alpha_r$, and if we take 
$\epsilon=\mathrm{min}(\epsilon^2,\dots,\epsilon^r)$, then we get the result.

\noindent Conversely suppose that $\mathrm{in}(g,b)=\mathrm{in}(g,a)$ for all $b\in  B(a,\epsilon)$ where $\epsilon\in{\mathbb R}_+$.  Write
 $\mathrm{in}(g,a)=\sum_{i=1}^sc_{\beta_i}\underline{x}^{\beta_i}$, and suppose that $s>1$. Using the same argument and notations as above, we can choose an element 
$a+\underline{\epsilon}\in B(a,\epsilon)$ such that 
 $(a+\underline{\epsilon}) \cdot (\beta_1-\beta_2)<0$. This is a contradiction because the
initial form does not depend on the choice of elements in $B(a,\epsilon)$. $\blacksquare$
 \end{proof}
 

\begin{lema} Let $a\in U^n$ and let $\lbrace g_1,\dots,g_r\rbrace$ be the $a$-reduced canonical basis 
of ${\bf A}$. The algebra $\mathrm{in}({\bf A},a)$ is monomial if and only if  $\mathrm{in}(g_i,a)$ is a 
monomial for all $i\in\lbrace 1,\dots,r\rbrace$.
\end{lema}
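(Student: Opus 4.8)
The statement to prove is: for $a\in U^n$ and $\{g_1,\dots,g_r\}$ an $a$-reduced canonical basis of ${\bf A}$, the algebra ${\mathrm in}({\bf A},a)$ is multihomogeneous if and only if each ${\mathrm in}(g_i,a)$ is a monomial. The plan is to prove the two implications separately; the nontrivial direction is the forward one, so I would begin with the easy converse.

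\emph{The converse.} Suppose ${\mathrm in}(g_i,a)$ is a monomial for every $i$. Since $\{g_1,\dots,g_r\}$ is an $a$-canonical basis, ${\mathrm in}({\bf A},a)={\mathbb K}[[{\mathrm in}(g_1,a),\dots,{\mathrm in}(g_r,a)]]$ (by the definition of $a$-canonical basis, ${\mathrm in}({\bf A},a)$ is generated by the $a$-initial forms of any canonical basis — this is essentially the content of the discussion following the definition of $a$-canonical basis, since ${\mathrm exp}({\bf A},a)$ is generated by the ${\mathrm exp}(g_i,a)$). Each generator ${\mathrm in}(g_i,a)$ is a monomial, hence in particular multihomogeneous, and so ${\mathrm in}({\bf A},a)$ is generated by multihomogeneous elements; by Definition \ref{multi} it is a multihomogeneous algebra. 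This direction is routine.

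\emph{The forward direction.} Suppose ${\mathrm in}({\bf A},a)$ is multihomogeneous; I must show each ${\mathrm in}(g_i,a)$ is a monomial. Fix $i$ and set $g={\mathrm in}(g_i,a)\in{\mathrm in}({\bf A},a)$. By the remark in Definition \ref{multi}, every monomial of $g$ lies in ${\mathrm in}({\bf A},a)$; in particular $\M(g_i,a)=c_{{\mathrm exp}(g_i,a)}\underline{x}^{{\mathrm exp}(g_i,a)}$ is a monomial of $g$, so if $g$ had a second monomial, that monomial $m$ would satisfy $m\in{\mathrm in}({\bf A},a)$ and $<a,{\mathrm exp}(m)>=\nu(g_i,a)=<a,{\mathrm exp}(g_i,a)>$. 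The key step is then: $m\in{\mathrm in}({\bf A},a)$ forces $\underline{x}^{{\mathrm exp}(m)}\in\M({\bf A},a)$ — because $m$ is itself $a$-homogeneous (a monomial), so $\M(m,a)=m$, hence $m=\M(m,a)\in\M({\mathrm in}({\bf A},a))=\M({\bf A},a)$ (the last equality being that $\M$ of an algebra equals $\M$ of its $a$-leading-term algebra, as $\M(f,a)=\M({\mathrm in}(f,a),a)$). But $m\in\mathrm{Supp}(g_i-\M(g_i,a))$, contradicting condition (iii) of the definition of a reduced canonical basis, which says no monomial $\underline{x}^{\alpha}$ with $\alpha\in\mathrm{Supp}(g_i-\M(g_i,a))$ lies in ${\mathbb K}[[\M(g_1,a),\dots,\M(g_r,a)]]=\M({\bf A},a)$. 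Hence $g={\mathrm in}(g_i,a)$ has a single monomial, i.e.\ is a monomial.

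\emph{Main obstacle.} The delicate point is the claim that $\M({\bf A},a)=\M({\mathrm in}({\bf A},a))$, i.e.\ that the set of leading monomials of elements of ${\bf A}$ coincides with that of elements of ${\mathrm in}({\bf A},a)$, and correspondingly that a monomial belonging to ${\mathrm in}({\bf A},a)$ already belongs to $\M({\bf A},a)=\langle\M(g_1,a),\dots,\M(g_r,a)\rangle$. This follows from $\M(f,a)=\M({\mathrm in}(f,a),a)$ together with ${\mathrm exp}({\bf A},a)=\langle{\mathrm exp}(g_1,a),\dots,{\mathrm exp}(g_r,a)\rangle$, but it must be invoked carefully so that condition (iii) of reducedness can be applied verbatim. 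Once that identification is in place the contradiction with (iii) is immediate and the proof closes.
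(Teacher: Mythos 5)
Your proof is correct and follows essentially the same route as the paper: the nontrivial direction is handled by taking a second monomial of ${\mathrm in}(g_i,a)$, placing it in ${\mathrm in}({\bf A},a)$ via multihomogeneity, and contradicting condition (iii) of reducedness. The only difference is that you make explicit the identification of a monomial of ${\mathrm in}({\bf A},a)$ with an element of $\M({\bf A},a)={\mathbb K}[[\M(g_1,a),\ldots,\M(g_r,a)]]$, a step the paper leaves implicit.
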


\begin{proof} We only need to prove the if part. Let $i\in\lbrace 1,\dots,r\rbrace$ and write $\mathrm{in}(g_i,a)=M_1+\cdots+M_t$ with
 $\mathrm{exp}(g_i,a)=\mathrm{exp}(M_1,a)$. Assume that $t>1$. Since $\mathrm{in}({\bf A},a)$ is monomial, we have 
$M_i\in \mathrm{in}({\bf A},a)$ for all $i\in\lbrace 2,\dots,t\rbrace$. But $\lbrace g_1,\dots,g_r\rbrace$ is reduced. This is a contradiction. Hence 
$t=1$ and $\mathrm{in}(g_i,a)$ is a monomial. $\blacksquare$
\end{proof} 

\begin{proposicion} The set of $a\in U^n$ for which $\mathrm{in}({\bf A},a)$ is monomial defines the open cones of dimension $n$ of ${\mathcal{P}}$. 

\end{proposicion}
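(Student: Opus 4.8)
The plan is to prove the two inclusions separately, using the multihomogeneity criterion from the preceding lemma together with the description of the partition $\mathcal{P}$ via the equivalence relation $\sim$.

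\medskip

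\noindent First I would show that if ${\mathrm in}({\bf A},a)$ is multihomogeneous then $a$ lies in an open cone of dimension $n$. Fix an $a$-reduced canonical basis $\lbrace g_1,\ldots,g_r\rbrace$ of ${\bf A}$; by the lemma, ${\mathrm in}(g_i,a)=\M(g_i,a)$ is a monomial for every $i$. For a monomial $\underline{x}^\beta$ the condition $<a,\beta>$ being strictly minimal among the exponents of $g_i$ is an open condition on $a$: for each $i$ and each $\gamma\in{\rm Supp}(g_i)\setminus\lbrace{\mathrm exp}(g_i,a)\rbrace$ we need the strict inequality $<a,{\mathrm exp}(g_i,a)> \,<\, <a,\gamma>$, and the intersection of finitely many such open half-spaces with $U^n$ is an open set containing $a$. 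On this open set the $a$-initial forms of the $g_i$ are unchanged, so the whole neighbourhood lies in the same class of $\sim$, hence in the same cone $\sigma$ of $\mathcal{P}$; since $\sigma$ contains a nonempty open subset of $\RR^n$, it has dimension $n$.

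\medskip

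\noindent Conversely, I would show that if $\sigma\in\mathcal{P}$ has dimension $n$ and $a\in\sigma$, then ${\mathrm in}({\bf A},a)$ is multihomogeneous. Again fix the $a$-reduced canonical basis $\lbrace g_1,\ldots,g_r\rbrace$, which by Remark \ref{in=exp} is the $b$-reduced canonical basis for every $b\in\sigma$. Suppose some ${\mathrm in}(g_i,a)$ is \emph{not} a monomial, so it contains two distinct exponents $\beta_1={\mathrm exp}(g_i,a)$ and $\beta_2$ with $<a,\beta_1>\,=\,<a,\beta_2>$. Then $a$ satisfies the linear equation $<a,\beta_1-\beta_2>\,=\,0$, which is a nontrivial hyperplane since $\beta_1\neq\beta_2$. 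Every $b\in\sigma$ has ${\mathrm in}(g_i,b)={\mathrm in}(g_i,a)$, so $\beta_1$ and $\beta_2$ both lie in ${\rm Supp}({\mathrm in}(g_i,b))$, forcing $<b,\beta_1-\beta_2>\,=\,0$ for all $b\in\sigma$; hence $\sigma$ is contained in that hyperplane and has dimension $\leq n-1$, a contradiction. Therefore every ${\mathrm in}(g_i,a)$ is a monomial, and by the lemma ${\mathrm in}({\bf A},a)$ is multihomogeneous.

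\medskip

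\noindent The main obstacle is bookkeeping rather than conceptual: one must be careful that the open neighbourhood produced in the first direction genuinely stays inside $U^n$ (which is automatic since $U^n$ is itself open) and that it does not escape $\sigma$ — this is where the stability statement ${\mathrm in}(g_i,\cdot)$ locally constant, together with the fact that $\sim$-classes are the cones of $\mathcal{P}$, does the work. One should also note explicitly that an $n$-dimensional cone of $\mathcal{P}$, being a full-dimensional convex polyhedral cone, has nonempty interior, so "open cone of dimension $n$" and "cone of dimension $n$" describe the same objects here; this justifies phrasing the statement in terms of open cones.
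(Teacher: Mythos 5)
Your proof is correct and follows essentially the same strategy as the paper: monomial initial forms give an open stability condition around $a$ (forward direction), and a tie between two exponents of some ${\mathrm in}(g_i,a)$ forces a codimension-one linear constraint (converse). In fact your converse is spelled out more carefully than the paper's: the paper only asserts that stability of the reduced canonical basis on a neighbourhood ``implies that ${\mathrm in}(g_i,a)$ is a monomial,'' whereas your observation that every $b$ in the $\sim$-class satisfies $<b,\beta_1-\beta_2>=0$, so that the class lies in a proper hyperplane, is the clean justification. One small imprecision in your forward direction: since $g_i$ is a formal power series, ${\rm Supp}(g_i)$ may be infinite, so you are not intersecting \emph{finitely} many open half-spaces; openness still holds because $a$ has strictly positive coordinates, so all but finitely many $\gamma\in{\rm Supp}(g_i)$ satisfy $<b,\gamma>\,>\,<b,{\mathrm exp}(g_i,a)>$ uniformly for $b$ near $a$, and only the remaining finitely many contribute genuine half-space conditions. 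The paper glosses over the same point.
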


\begin{proof} Let $a\in U^n$ and suppose that in$({\bf A},a)$ is monomial. Let $\sigma$ be the cone corresponding to $a$ and let 
$\lbrace g_1,\dots,g_r\rbrace$ be the $a$-reduced canonical basis of ${\bf A}$. For all $i\in\lbrace 1,\dots,r\rbrace$, $\mathrm{in}(g_i,a)$ is a
 monomial, hence, by Lemma \ref{monomial}, there exists $\epsilon >0$ such that $\mathrm{in}(g_i,b)=\mathrm{in}(g_i,a)$ for all $b\in B(a,\epsilon)$. This proves, by Lemma \ref{expequal} and Lemma \ref{equalin},  that $\lbrace g_1,\dots,g_r\rbrace$ is also 
the $b$-reduced canonical basis of ${\bf A}$ for all $b\in B(a,\epsilon)$. Hence $B(a,\epsilon)\subseteq \sigma$. Conversely,
let $a$ be an element of an open cone $\sigma$ of dimension $n$ of $\mathcal{P}$, and let $\lbrace g_1,\dots,g_r\rbrace$ be the $a$-reduced canonical basis. For all 
$b$ in a neighbourhood of $a$, we have $\mathrm{in}({\bf A},b)=\mathrm{in}({\bf A},a)$ and $\mathrm{exp}({\bf A},b)=\mathrm{exp}({\bf A},a)$. In particular 
$\lbrace g_1,\dots,g_r\rbrace$ is also the $b$-reduced canonical basis. Moreover, for all $i\in\lbrace 1,\dots,r\rbrace$, $\mathrm{in}(g_i,a)=\mathrm{in}(g_i,b)$ and  
$\mathrm{exp}(g_i,a)=\mathrm{exp}(g_i,b)$. As $\mathrm{in}(g_i,b)$ does not depend on $b$ in a small ball $B(a,\epsilon)$, it results from Lemma
\ref{monomial} that $\mathrm{in}(g_i,a)$ is a monomial, and consequently $\mathrm{in}({\bf A},a)$ is a monomial algebra. $\blacksquare$
\end{proof}

\begin{example}  Let ${\bf A}$ be as in Example \ref{finitedimension}. Then  ${\bf A}={\mathbb K}[\![x,xy+y^2,y^3,x^2y]\!]$, and $\mathrm{exp}({\bf A},a)$ is
either $\langle (1,0), (1,1),(1,2), (0,3),(0,4),(0,5)\rangle$, or $\langle (1,0), (0,2),(0,3),(2,1)\rangle$, depending on  $(1,0) >_a (0,1)$ or $(0,1) >_a(1,0)$. The fan 
associated with ${\bf A}$  is $C_1\cup C_2\cup C_3$ where $C_1$ is the open cone generated by $(1,0),(1,1)$ 
(with $\mathrm{in}({\bf A},a)=\mathrm{M}({\bf A},a)={\mathbb K}[\![x,xy, xy^2,y^3,y^4,y^5]\!]$ for all $a\in C_1$), $C_2$  is the open cone generated by $(0,1),(1,1)$
(with $\mathrm{in}({\bf A},a)=\mathrm{M}({\bf A},a)={\mathbb K}[\![ x, y^2, y^3,x^2y]\!]$ for all $a\in C_2$), and $C_3$ is the 
line generated by $(1,1)$ (with  $\mathrm{in}({\bf A},a)={\mathbb K}[\![x,xy+y^2, xy^2,y^3,y^4,y^5]\!]$ and $\mathrm{M}({\bf A},a)={\mathbb K}[\![x,xy, xy^2,y^3,y^4,y^5]\!]$ for
all $a\in C_3$).  

\medskip

\noindent{\bf Acknowledgements.} The author would like to thank the referee for his many valuable comments on the previous versions of the paper.



\end{example}

\end{document}